\documentclass[10pt,reqno]{amsart}
\setlength{\topmargin}{0cm}
\setlength{\textheight}{21cm}
\setlength{\oddsidemargin}{0in}
\setlength{\evensidemargin}{0in}
\setlength{\textwidth}{6.5in}
\setlength{\parindent}{.25in}

\usepackage{amsmath} 
\usepackage{amssymb}
\usepackage{dsfont}
\usepackage[dvips,draft,final]{graphics}
\usepackage{amssymb,amsmath}
\usepackage[T1]{fontenc}
\usepackage{fancyhdr}
\usepackage{url}

\usepackage{color}
\usepackage{graphicx}

\def\squarebox#1{\hbox to #1{\hfill\vbox to #1{\vfill}}}

\newtheorem{Thm}{Theorem}[section]

\newtheorem{lem}{Lemma}[section]

\pagestyle{headings}
\numberwithin{equation}{section}

\newcommand{\bel}{\begin{equation} \label}
\newcommand{\ee}{\end{equation}}

\newcommand{\pd}{\partial}

\newcommand{\R}{\mathbb{R}}

\def\epsilon{\varepsilon}
\def\phi {\varphi}

\newtheorem{rem}{Remark}[section]
\newtheorem{prop}{Proposition}[section]

\providecommand{\abs}[1]{\left\lvert#1\right\rvert}
\providecommand{\norm}[1]{\left\lVert#1\right\rVert}
\numberwithin{equation}{section}

\renewcommand{\leq}{\leqslant}
\renewcommand{\geq}{\geqslant}
\providecommand{\abs}[1]{\left\lvert#1\right\rvert}
\providecommand{\norm}[1]{\left\lVert#1\right\rVert}

\def\beq{\begin{equation}}
\def\eeq{\end{equation}}
\newcommand{\bea}{\begin{eqnarray}}
\newcommand{\eea}{\end{eqnarray}}
\newcommand{\beas}{\begin{eqnarray*}}
\newcommand{\eeas}{\end{eqnarray*}}

{

\begin{document}

\title[Recovery of  time-dependent damping coefficients appearing in  wave equations]{Recovery of  time-dependent damping coefficients and potentials  appearing in  wave equations from partial data}

\author[Yavar Kian]{Yavar Kian}
\address{Aix Marseille Univ, Universit\'e de Toulon, CNRS, CPT, Marseille, France.}
\email{yavar.kian@univ-amu.fr}
\maketitle

\begin{abstract}
We consider   the inverse problem of  determining   a time-dependent damping coefficient $a$ and a time-dependent potential $q$, appearing in   the wave equation $\partial_t^2u-\Delta_x u+a(t,x)\pd_tu+q(t,x)u=0$ in $Q=(0,T)\times\Omega$, with $T>0$ and $\Omega$  a  $ \mathcal C^2$ bounded domain of $\R^n$, $n\geq2$, from  partial observations of the solutions on $\partial Q$. More precisely, we look for   observations on $\partial Q$ that allow to determine uniquely a large class of  time-dependent damping coefficients $a$ and  time-dependent potentials $q$ without involving an important set of data. Assuming that $a$ is known on $\partial Q$, we prove global unique determination  of  $a\in  W^{1,p}(Q)$, with $p>n+1$, and $q\in L^\infty(Q)$ from partial observations on $\partial Q$. Our problem is related to the determination of  nonlinear terms appearing in  nonlinear wave equations. \\

\medskip
\noindent
{\bf  Keywords:} Inverse problems, wave equation, time-dependent damping coefficient, time-dependent potential, uniqueness, Carleman estimates, partial data.\\

\medskip
\noindent
{\bf Mathematics subject classification 2010 :} 35R30, 	35L05.
\end{abstract}

\section{Introduction}
\label{sec-intro}
\setcounter{equation}{0}
\subsection{Statement of the problem }
Let  $\Omega$   be a $\mathcal C^2$ bounded domain  of $\R^n$, $n\geq2$, and  fix  $\Sigma=(0,T)\times\partial\Omega$, $Q=(0,T)\times\Omega$ with $0<T<\infty$.  We consider the wave equation \begin{equation}\label{wave}\partial_t^2u-\Delta_x u+a(t,x)\pd_tu+q(t,x)u=0,\quad (t,x)\in Q,\end{equation}
where  the damping coefficient $a\in L^\infty(Q)$ and the potential $q\in L^\infty(Q)$ are real valued.  In the present paper we
seek unique determination of both $a$ and $q$ from observations  of  solutions of \eqref{wave} on $\partial Q$. 

Let $\nu$ be the outward unit normal vector to $\partial\Omega$, $\partial_\nu=\nu\cdot\nabla_x$ the normal derivative and from now on let $\Box$ and $L_{a,q}$ be  the differential operators $\Box:=\partial_t^2-\Delta_x$, $L_{a,q}:=\Box +a\pd_t+q$. It has been proved by \cite{RS1},  that for $T>\textrm{Diam}(\Omega)$ the data
\begin{equation}\label{data1} \mathcal A_{a,q}=\{(u_{|\Sigma},\partial_\nu u_{|\Sigma}):\ u\in H^1(0,T;L^2(\Omega)),\ \Box u+a\pd_tu+qu=0,\ u_{|t=0}=\partial_tu_{|t=0}=0\}\end{equation}
determines uniquely a time-independent potential $q$ when $a=0$.  The result of \cite{RS1} has been extended to the recovery of a time-independent damping coefficient $a$ by \cite{I1}. Contrary to time-independent coefficients, due to domain of dependence arguments there is  no hope to recover  the restriction of a general time-dependent coefficient  to the set $$D=\{(t,x)\in Q:\  0<t<\textrm{Diam}(\Omega)/2,\  \textrm{dist}(x,\partial\Omega)> t\}$$ from the data $\mathcal A_{a,q}$ (see \cite[Subsection 1.1]{Ki2}). On the other hand, according to \cite[Theorem 4.2]{I}, for $a=0$, the extended set of  data 
\begin{equation}\label{cq}C_{a,q}=\{(u_{\vert\Sigma},u_{\vert t=0}, \partial_tu_{\vert t=0},\partial_\nu u_{\vert\Sigma},u_{\vert t=T},\partial_tu_{\vert t=T}):\  u\in H^1(0,T;L^2(\Omega)),\ L_{a,q}u=0\}\end{equation}
determines uniquely a time-dependent potential $q$. Taking into account the obstruction to the unique determination from the data $\mathcal A_{a,q}$ and the result of \cite{I}, the goal of the present paper is to determine a general time-dependent damping coefficient $a$ and a general time-dependent potential $q$ from partial knowledge of the important set of data  $C_{a,q}$.

\subsection{Physical and mathematical motivations }

In practice, our inverse problem consists of determining  physical properties such as the time evolving damping force and the density of an inhomogeneous medium by probing it with disturbances generated on  the boundary and at  initial time and by measuring  the response  to these disturbances on some parts of the boundary and at the end of the experiment. The goal is to determine the functions $a$ and $q$ which measure the damping force and the property of the medium. The determination of such a time-dependent coefficients can also correspond to the recovery of some time evolving  properties that can not be modeled by  time-independent coefficients.

As mentioned in \cite{Ki2,Ki3}, following the strategy set in \cite{I2} for parabolic equations, the recovery  
of   nonlinear terms, appearing in some suitable nonlinear wave equations, from observations  on $\partial Q$  can be reduced to the determination of  time-dependent coefficients, with weak regularity,  appearing in a linear wave equation. In this context, the regularity of the time-dependent coefficients depends on the regularity of the solutions of the nonlinear equations. Thus, for this application of our problem it is important to weaken as much as   possible the regularity of the admissible time-dependent coefficients.

\subsection{State of the art }

The determination of  coefficients for hyperbolic equations  from boundary measurements  has  attracted many attention in recent years.  Many authors  considered the recovery of time-independent potentials from  observations given by the set $\mathcal A_{a,q}$ defined by \eqref{data1} for $a=0$. In \cite{RS1}, the authors proved that, for $a=0$,   $\mathcal A_{a,q}$ determines uniquely a time-independent  potential $q$.  The uniqueness by partial boundary observations has been considered in \cite{E1}. We also precise that the  stability issue for this problem has been studied by \cite{BD,BJY,Ki,Mo,SU,SU2}. 

Some authors treated also the recovery of both time-independent damping coefficients and potentials from boundary measurements. In \cite{I1}, Isakov extended the result of  \cite{RS1}, to the recovery of both  damping coefficients and potentials from the data $\mathcal A_{a,q}$. For $n=3$, \cite{IS} proved stable recovery of the restriction of both time-independent damping coefficients and potentials on the intersection of the domain and a half-space from measurements on the intersection of the boundary of the domain and the same half-space.  Following the strategy set by \cite{BK}, \cite{BCIY,LT1,LT2} proved uniqueness and stability in the recovery of both  damping coefficients and potentials from a single boundary measurements. In some recent work, 
\cite{AC} proved a log-type stability estimate in the recovery of  time-independent damping coefficients and  potentials appearing  in a dissipative wave equation from the initial boundary map.

All the above mentioned results are concerned  with time-independent coefficients. Several authors considered the problem of determining time-dependent coefficients for hyperbolic equations. In \cite{St}, Stefanov proved the recovery of a time-dependent potential appearing in the wave equation  from the knowledge of scattering data by using some  properties of the light-ray transform.   In \cite{RS}, Ramm and  Sj\"ostrand considered the determination of a time-dependent potential $q$ from the data $(u_{|\R\times\partial\Omega}, \partial_\nu u_{|\R\times\partial\Omega})$ of forward solutions of \eqref{wave} with $a=0$ on the infinite time-space cylindrical domain $\R_t\times\Omega$  instead of $Q$ ($t\in\R$ instead of $0<t<T<\infty$).   Rakesh and  Ramm \cite{RR} treated  this  problem at finite time on $Q$, with $T>\textrm{Diam} (\Omega)$, and they determined uniquely $q$ restricted to some  subset of $Q$ from $\mathcal A_{a,q}$ with $a=0$.  Isakov established in \cite[Theorem 4.2]{I} unique determination of  general time-dependent potentials on the whole domain $Q$ from the extended data $C_{a,q}$ given by \eqref{cq} with $a=0$. Using  a result of unique continuation borrowed from \cite{T}, Eskin \cite{E2} proved unique recovery of time-dependent coefficients  analytic with respect to the time variable $t$ from  partial knowledge of the data $\mathcal A_{a,q}$.  Salazar \cite{S} extended the result of \cite{RS} to more general coefficients. Moreover, \cite{W} stated stability in the recovery of  X-ray transforms of time-dependent potentials on a  manifold and \cite{A} proved log-type stability in the determination of time-dependent potentials from the data considered by \cite{I} and \cite{RR}. We mention also the recent work of \cite{BB} where the authors have extended the results of 
\cite{A} to the recovery of both time-dependent damping coefficients and potentials. In \cite{Ki2,Ki3}, the author considered both uniqueness and stability in the recovery of some general time dependent potential $q$ from (roughly speaking) half of the data considered by \cite{I}. To our best knowledge the results of \cite{Ki2,Ki3} are stated with the weakest conditions so far that guaranty determination of a general time dependent potential, appearing in a wave equation, at finite time. We also mention that \cite{Ch,CK,CKS,FK,GK} examined the determination of time-dependent coefficients for fractional diffusion, parabolic and Schr\"odinger equations and  proved stability estimate for these problems.

\subsection{Main result}
To state our main result, we  first introduce some intermediate tools and notations. 
For all $\omega\in\mathbb S^{n-1}=\{x\in\R^n:\ \abs{x}=1\}$ we introduce the $\omega$-shadowed and $\omega$-illuminated faces
\[\partial\Omega_{+,\omega}=\{x\in\partial\Omega:\ \nu(x)\cdot\omega>0\},\quad \partial\Omega_{-,\omega}=\{x\in\partial\Omega:\ \nu(x)\cdot\omega\leq0\}\]
of $\partial\Omega$. Here, for all $k\in\mathbb N^*$, $\cdot$ corresponds to the scalar product in $\R^k$ defined by
\[ x\cdot y=x_1y_1+\ldots +x_ky_k,\quad x=(x_1,\ldots,x_k)\in \R^k,\ y=(y_1,\ldots,y_k)\in \R^k.\]
We define also the parts of the lateral boundary $\Sigma$ given by 
$\Sigma_{\pm,\omega}=(0,T)\times \partial\Omega_{\pm,\omega}$.
We fix $\omega_0\in \mathbb S^{n-1}$ and we consider $V=(0,T)\times V'$  with $V'$  a closed  neighborhood of  $\partial\Omega_{-,\omega_0}$ in $\partial\Omega$.

 The main purpose of this paper is to prove  the unique global  determination of  time-dependent  and real valued damping coefficient $a\in L^\infty(Q)$ and  $q\in L^\infty(Q)$  from the data
\[C_{a,q}^*=\{(u_{|\Sigma},u_{|t=0},\partial_t u_{|t=0},\partial_\nu u_{|V}, u_{|t=T}):\ u\in H^1(0,T; L^2(\Omega)),\ L_{a,q}u=0\}.\]
We refer to Section 2 for the definition of this set. Our main result can be  stated as follows.

\begin{Thm}\label{thm1} 
Let $q_1,\ q_2 \in L^\infty(Q)$ and let  $a_1,a_2\in  W^{1,p}(Q)$ with $p>n+1$. Assume that 
\begin{equation}\label{thm1bb} a_1(t,x)=a_2(t,x),\quad (t,x)\in\pd Q.\end{equation}
Then, the condition
\begin{equation}\label{thm1a}C_{a_1,q_1}^*=C_{a_2,q_2}^*\end{equation}
implies that $a_1=a_2$ and $q_1=q_2$.
\end{Thm}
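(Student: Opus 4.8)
The plan is to follow the classical geometric-optics strategy for hyperbolic inverse problems, adapted to handle the damping term $a\partial_t u$ and the partial-data restriction of $\partial_\nu u$ to $V$. First I would write $a=a_1-a_2$ and $q=q_1-q_2$, extended by zero outside $Q$; note $a$ vanishes on $\partial Q$ by \eqref{thm1bb}, and using $a\in W^{1,p}(Q)$ with $p>n+1$ and Sobolev embedding, $a$ is continuous on $\overline Q$, so the zero extension lies in $W^{1,p}(\R^{1+n})$ with support in $\overline Q$. The first key step is the standard integral identity: given a solution $u_2$ of $L_{a_2,q_2}u_2=0$ and a solution $v$ of the adjoint equation $L_{a_1,q_1}^*v=\partial_t^2v-\Delta_xv-\partial_t(a_1v)+q_1v=0$, the equality \eqref{thm1a} of the data sets produces a solution $u_1$ of $L_{a_1,q_1}u_1=0$ with matching Cauchy data on $\Sigma$, on $\{t=0\}$, on $\{t=T\}$ (for $\partial_t u$ only at $t=0$), and matching $\partial_\nu u$ on $V$; integrating $L_{a_1,q_1}(u_1-u_2)$ against $v$ and moving all derivatives off $w:=u_1-u_2$ yields
\[
\int_Q \big((a_2-a_1)\partial_t u_2 + (q_2-q_1)u_2\big)\,v\,\d x\,\d t = \int_{\Sigma\setminus V}\partial_\nu w\,\overline v\,\d\sigma\,\d t,
\]
after the boundary and initial/final terms from the integration by parts cancel thanks to the matched data; so if I arrange $v$ to vanish on $\Sigma\setminus V$ the right-hand side disappears and I get $\int_Q (a\,\partial_t u_2 + q\,u_2)\,v = 0$.

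The second step is to construct the required families of geometric-optics solutions. For the fixed direction $\omega_0$, I would build, for large parameter $\sigma$, exponentially behaving solutions $u_2=e^{-\sigma(t+x\cdot\omega)}(b_2(t,x)+R_2)$ of $L_{a_2,q_2}u_2=0$ and $v=e^{-\sigma(t+x\cdot\omega)}(b_\ell(t,x)+R_\ell)$ of $L_{a_1,q_1}^*v=0$ (with $\omega$ near $\omega_0$), where the phase $\psi=t+x\cdot\omega$ solves the eikonal equation $(\partial_t\psi)^2=|\nabla_x\psi|^2$, the amplitudes solve first-order transport equations along the null lines, and $R_j$ are correction terms with $\|R_j\|\to0$ as $\sigma\to\infty$; the key point compared to the potential-only case is that the transport equation for the amplitude of $u_2$ now reads $2(\partial_t+\omega\cdot\nabla_x)b_2 + a_2\,b_2=0$ (the $a_2\partial_t u_2$ term contributes at the same order $\sigma$ as the transport operator), which integrates to $b_2=\exp(-\tfrac12\int a_2\,\mathrm{along\ the\ line})\cdot(\text{free amplitude})$, and similarly for the adjoint with $a_1$. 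Crucially, the support condition on $a$ and the choice of $V'$ as a neighborhood of $\partial\Omega_{-,\omega_0}$ let me choose the free amplitudes supported so that $v$ vanishes on $\Sigma\setminus V$ — this is exactly the partial-data geometry of \cite{Ki2,Ki3}, and I would import that construction (extending the domain in the $\omega_0$-direction, using cutoffs that vanish near the illuminated face).

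The third step: inserting these solutions into $\int_Q(a\,\partial_t u_2 + q\,u_2)v=0$ and letting $\sigma\to\infty$, the leading term is $\int_Q \big(-\sigma\,a\,b_2\,b_\ell + \text{lower order}\big)\,\d x\,\d t$; dividing by $\sigma$ and passing to the limit first isolates
\[
\int_Q a(t,x)\,b_2(t,x)\,b_\ell(t,x)\,\d x\,\d t = 0
\]
for all admissible free amplitudes, which after choosing the free amplitudes to be (approximately) delta-concentrated on null geodesics shows that the light-ray transform of $a$ in direction $(1,\omega)$ vanishes for all $\omega$ in a neighborhood of $\omega_0$. Because $a$ is compactly supported in $\overline Q$ and $T$ plays no role in the light-ray transform over all of $\R^{1+n}$, and since we have an open set of directions, a support-theorem / Fourier-slice argument (the light-ray transform in a cone of timelike-boundary directions determines a compactly supported function — cf. the arguments of \cite{St,RR,Ki2}) forces $a\equiv0$, hence $a_1=a_2$. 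Once $a=0$ is known, the identity collapses to $\int_Q q\,u_2\,v=0$ with $b_2b_\ell$ now an arbitrary product of free transport amplitudes, which is precisely the situation handled in \cite{I,Ki2} and yields vanishing of the light-ray transform of $q$ and then $q\equiv0$ by the same support argument; alternatively one inverts the X-ray transform of $q$ directly.

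The main obstacle I anticipate is the simultaneous treatment of the two unknowns in the single integral identity: the damping term enters at order $\sigma$ and the potential at order $1$, so the argument must be staged — first extract $a$ from the $O(\sigma)$ term, prove $a_1=a_2$, and only then recover $q$ from the remaining $O(1)$ term — and this staging requires the geometric-optics remainder estimates to be uniform and the amplitude $b_2 b_\ell$ (which carries the factor $\exp(-\tfrac12\int(a_2-a_1))$... here $a_1,a_2$ are the individual coefficients, so the product of amplitudes carries $\exp(-\tfrac12\int(a_1+a_2))\neq1$ in general) to be handled carefully so that, at the first stage, the weight is a known nonvanishing factor that can be divided out of the light-ray transform identity. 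The low regularity $a\in W^{1,p}(Q)$, $p>n+1$ (rather than smooth), is what makes the transport-equation solvability and the remainder estimates delicate, and matching it with the regularity needed to invert the light-ray transform is the technical heart of the paper; the partial-data bookkeeping (ensuring $v$ truly vanishes on $\Sigma\setminus V$ while keeping enough directions $\omega$ near $\omega_0$) is the other place where care is needed, but it is essentially the construction already developed in \cite{Ki2,Ki3}.
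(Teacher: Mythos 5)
Your overall skeleton (GO solutions with phases $e^{\pm\lambda(t+x\cdot\omega)}$, transport equations that fold the damping coefficients into the amplitudes, an integral identity extracted from the matched data, and a light-ray/Fourier-slice argument at the end) is the same as the paper's, but two of the mechanisms you rely on do not work as stated, and they are precisely where the paper's proof lives.

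First, the handling of the unobserved data. You propose to make the dual solution $v$ vanish on $\Sigma\setminus V$ by a suitable choice of the free transport amplitude, so that the boundary term drops out of the identity. This is not achievable: the correction term $R_\ell$ is produced by a Hahn--Banach/duality argument from a Carleman estimate and carries no support information, and even for the principal amplitude the null lines in direction $(1,-\omega)$ that meet $\Sigma_{+,\omega}$ sweep out essentially all of $Q$ outside the domain-of-dependence region, so forcing the amplitude to vanish on those lines would destroy the recovery of $a$ on the rest of $Q$. Moreover your identity silently drops the final-time term $\int_\Omega\partial_t w(T,\cdot)\,v(T,\cdot)\,dx$: the data set $C^*_{a,q}$ contains $u_{|t=T}$ but not $\partial_t u_{|t=T}$, so this term does not cancel and cannot be removed by any choice of amplitude either. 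The paper's resolution is entirely different: it keeps these terms (see \eqref{t3a}) and absorbs them with the boundary Carleman estimate \eqref{c1a} applied to $w=z_1-u_2$, which controls $\partial_\nu w$ on $\Sigma_{+,\omega}\supset\Sigma\setminus V$ and $\partial_t w(T,\cdot)$ with the weight $e^{-2\lambda(t+\omega\cdot x)}$ matching the exponential decay of $u_1$. Establishing that estimate in the presence of the first-order term $a\partial_t$ (via the perturbed weight \eqref{phi} and a convexity argument in the auxiliary parameter $s$) is a substantial part of the paper that your plan omits.

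Second, the weighted ray transform. The product of amplitudes is not free: it carries the factor $\exp\bigl(\tfrac12\int_0^{+\infty}(a_2-a_1)((t,x)+s(1,-\omega))\,ds\bigr)$, which depends on the unknown difference $a=a_2-a_1$ (not on the sum, and it is certainly not a known factor that can be divided out). So the limit of your identity gives the vanishing of a nonlinearly weighted light-ray transform of $a$, and concentrating the amplitudes on null geodesics does not linearize it. The paper resolves this with a telescoping identity: along each null line, $a\,e^{\frac12\int_\tau^{+\infty}a}=-2\,\partial_\tau e^{\frac12\int_\tau^{+\infty}a}$, so the weighted line integral equals $-2\bigl(1-\exp(\tfrac12\int_\R a)\bigr)$; the vanishing of the Fourier transform of this quantity on $(1,-\omega)^\bot$ forces $\int_\R a=0$ on every line (using that $a$ is real), after which the Fourier-slice and Paley--Wiener analyticity argument you describe does apply. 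Without this step, or an equivalent one, your first stage does not close, and the second stage (recovery of $q$ once $a_1=a_2$) is blocked as well.
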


To our best knowledge this paper is the first treating uniqueness in the recovery of time-dependent damping coefficients. Moreover, it seems that with \cite{E2,E3,S} this paper is the first considering recovery of time-dependent coefficients of order one and it appears that this  work is the first  treating this problem for general coefficients at finite time (\cite{E2,E3}   proved recovery of coefficients analytic with respect to the time variable $t$, \cite{S} considered the problem for all time $t\in\mathbb R$). We point out that our uniqueness result is stated for general coefficients with observations close to the one considered by \cite{Ki2,Ki3}, where recovery of time-dependent potentials is proved with conditions that seems to be one of the weakest so far for a general class of time-dependent coefficients. Indeed, the only difference between \cite{Ki2,Ki3} and the present paper comes from the restriction on the Dirichlet boundary condition  and the initial value (\cite{Ki2,Ki3} consider Dirichlet boundary condition supported on a neighborhood of the $\omega_0$-shadowed face  and vanishing at $t=0$, where here we do not make restriction on the support of the Dirichlet boundary condition and at $t=0$).
We also mention that in contrast to \cite{E2}, we do not apply results of unique continuation  that require the analyticity with respect to the time variable $t$ of the coefficients.

Let us  observe that even for $T$ large,  according to the obstruction to uniqueness given by domain of dependence arguments (see \cite[Subsection 1.1]{Ki2}), there is no hope to remove all the information on $\{t=0\}$ and  $\{t=T\}$  for the global recovery of general time-dependent coefficients. Therefore, for our problem the  extra information on $\{t=0\}$ and  $\{t=T\}$, of solutions $u$ of \eqref{wave}, can not be completely removed.

The main tools in our analysis are Carleman estimates with linear weight and   geometric optics (GO in short) solutions suitably designed for our inverse problem. In a similar way to \cite{BJY, Ki2,Ki3},  we use  GO solutions taking the form of exponentially growing and exponentially decaying solutions   in accordance with our  Carleman estimate in order to both recover the coefficients and restrict the observations. Our GO solutions differ from the one of \cite{E1,E2,I1,RS1,RS,S} and, combined with our Carleman estimate, they  make it possible to prove  global recovery of time-dependent coefficients from partial knowledge of the set $C_{a,q}$ without using additional smoothness or geometrical assumptions. Even if this strategy is inspired by \cite{BJY, Ki2,Ki3} (see also \cite{BU,KSU} for the original idea in the case of elliptic equations), due to the presence of a variable coefficient of order one in \eqref{wave}, our approach differs from \cite{BJY, Ki2,Ki3} in many aspects. Indeed, to prove our Carleman estimate we perturb the linear weight and we prove this estimate by using a convexity argument that allows us to absorb the damping coefficient. Moreover, in contrast to \cite{Ki2,Ki3} our GO are designed for the recovery of the damping coefficient and we can not construct them by applying properties of solutions of PDEs with constant coefficients. We remedy to this by considering  Carleman estimates in Sobolev space of negative  order  and by using these estimates to build our GO solutions.  This construction  is inspired by the one used in \cite{DKSU,KSU} for the recovery of Schr\"odinger operators from partial boundary measurements.

Note that condition \eqref{thm1bb} is meaningful for damping coefficients that actually depend on the time variable $t$ ($\pd_ta_j\neq0$, $j=1,2$). Indeed, for time-independent  damping coefficients $a_1$, $a_2$,  \eqref{thm1bb} implies that $a_1=a_2$. However, by modifying the form of the principal part of the GO given in Section 4 in accordance with \cite{I1}, for $T>\textrm{Diam}(\Omega)$ we believe that we can restrict condition \eqref{thm1bb} to the knowledge of time-independent  damping coefficients on $\pd \Omega$ 
($a_1=a_2$ on $\pd\Omega$ instead of \eqref{thm1bb}). In order to avoid the inadequate expense of the size of the paper we will not treat that case.

We believe that, with some suitable modifications, the approach developed in the present paper can be used for proving recovery of more general time-dependent coefficients of order one including a magnetic field associated to a time-dependent magnetic potential.

\subsection{Outline}

This paper is organized as follows. In Section 2 we  introduce some tools and we define  the set of data $C_{a,q}^*$. In Section 3, we prove our first Carleman estimate which will play an important role in our analysis. In Section 4 we extend and mollify the damping coefficient and we  introduce the principal part of our GO solutions. In Section 5, we derive  a Carleman estimate in  Sobolev space of negative order. Then, using this estimate, we build suitable GO solutions associated to \eqref{wave}.  Finally  in Section 6, we combine the GO solutions of Section 5 with the Carleman estimate of Section 3 to prove Theorem \ref{thm1}. 

\section{Preliminary results}
In the present section we define the set of data $C_{a,q}^*$ and we recall some properties of the solutions of \eqref{wave} for any $a,q\in L^\infty(Q)$.  For this purpose, in a similar way to  \cite{Ki2}, we will introduce some preliminary tools. We define the  space
\[H_{\Box}(Q)=\{u\in H^1(0,T;L^2(\Omega)):\ \Box u=(\partial_t^2-\Delta_x) u\in L^2(Q)\},\]
 with the norm
\[\norm{u}^2_{H_{\Box}(Q)}=\norm{u}_{H^1(0,T;L^2(\Omega))}^2+\norm{(\partial_t^2-\Delta_x) u}_{L^2(Q)}^2.\]
We consider also  the  space
\[S=\{u\in H^1(0,T;L^2(\Omega)):\ (\partial_t^2-\Delta_x)u=0\}\]
and topologize it as a closed subset of $H^1(0,T;L^2(\Omega))$. Indeed, let $(f_k)_{k\in\mathbb N}$ be a sequence lying in $S$ that converge to $f$ in $H^1(0,T;L^2(\Omega))$. Then, $(f_k)_{k\in\mathbb N}$
converge to $f$ in the sense of $D'(Q)$ and in the same way $((\partial_t^2-\Delta_x)f_k)_{k\in\mathbb N}$ converge to $(\partial_t^2-\Delta_x)f$ in the sense of $D'(Q)$. Now using the fact that for all $k\in\mathbb N$, $(\partial_t^2-\Delta_x)f_k=0$  we deduce that $(\partial_t^2-\Delta_x)f=0$. This proves that $f\in S$ and that $S$ is a closed subspace of $H^1(0,T;L^2(\Omega))$.

In view of  \cite[Proposition 4]{Ki2},  the maps
\[\tau_0w=(w_{\vert\Sigma},w_{\vert t=0},\partial_t w_{\vert t=0}) ,\quad \tau_1w=(\partial_\nu w_{\vert\Sigma},w_{\vert t=T},\partial_t w_{\vert t=T}), \quad w\in \mathcal C^\infty(\overline{Q}),\]
can be extended continuously to $\tau_0:H_{\Box}(Q)\rightarrow H^{-3}(0,T; H^{-\frac{1}{2}}(\partial\Omega))\times H^{-2}(\Omega)\times H^{-4}(\Omega)$, $\tau_1:H_{\Box}(Q)\rightarrow H^{-3}(0,T; H^{-\frac{3}{2}}(\partial\Omega))\times H^{-2}(\Omega)\times H^{-4}(\Omega)$. Here for all $ w\in \mathcal C^\infty(\overline{Q})$ we set
\[\tau_0w=(\tau_{0,1}w,\tau_{0,2}w,\tau_{0,3}w),\quad \tau_1w=(\tau_{1,1}w,\tau_{1,2}w,\tau_{1,3}w),\]
where
\[ \tau_{0,1}w=w_{\vert\Sigma},\    \tau_{0,2}w=w_{\vert t=0},\ \tau_{0,3}w=\partial_tw_{\vert t=0},\  \tau_{1,1}w=\partial_\nu w_{\vert\Sigma},\ \tau_{1,2}w=w_{\vert t=T},\ \tau_{1,3}w=\partial_tw_{\vert t=T}.\]
Therefore, we can introduce
\[\mathcal H=\{\tau_0u:\ u\in H_{\Box}(Q)\}\subset  H^{-3}(0,T; H^{-\frac{1}{2}}(\partial\Omega))\times H^{-2}(\Omega)\times H^{-4}(\Omega).\]
Following \cite{Ki2} (see also \cite{BU,CKS1,CKS2,NS}  in the case of elliptic equations), in order to define an appropriate topology on $\mathcal H$ we consider the restriction of $\tau_0$ to the space $S$. Indeed, by repeating the arguments used in \cite[Proposition 1]{Ki2}, one can check that  the restriction of $\tau_0$ to $S$ is one to one and onto. Thus, we can   use  $\tau_0^{-1}:=({\tau_0}_{|S})^{-1}$ to define the norm of $\mathcal H$ by
\[\norm{(f,v_0,v_1)}_{\mathcal H}=\norm{\tau_0^{-1}(f,v_0,v_1)}_{H^1(0,T;L^2(\Omega))},\quad (f,v_0,v_1)\in\mathcal H,\]
with $\tau_0$ considered as its restriction to $S$. 
Let us introduce  the IBVP
\begin{equation}\label{eq1}\left\{\begin{array}{ll}\partial_t^2u-\Delta_x u+a(t,x)\pd_tu+q(t,x)u=0,\quad &\textrm{in}\ Q,\\  u(0,\cdot)=v_0,\quad \partial_tu(0,\cdot)=v_1,\quad &\textrm{in}\ \Omega,\\ u=g,\quad &\textrm{on}\ \Sigma.\end{array}\right.\end{equation}
We are now in position to state existence and uniqueness of solutions of this IBVP for $(g,v_0,v_1)\in \mathcal H$.
\begin{prop}\label{p6} Let $(g,v_0,v_1)\in \mathcal H$, $a\in L^\infty(Q)$ and $q\in L^\infty(Q)$. Then, the IBVP \eqref{eq1} admits a unique weak solution $u\in H^1(0,T;L^2(\Omega))$ satisfying 
\bel{p6a}
\norm{u}_{H^1(0,T;L^2(\Omega))}\leq C\norm{(g,v_0,v_1)}_{\mathcal H}
\ee
and the boundary operator $B_{a,q}: (g,v_0,v_1)\mapsto (\tau_{1,1}u_{|V}, \tau_{1,2}u)$ is a bounded operator from $\mathcal H$ to\\
 $H^{-3}(0,T; H^{-\frac{3}{2}}(V'))\times H^{-2}(\Omega)$.
\end{prop}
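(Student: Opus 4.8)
The plan is to reduce the IBVP \eqref{eq1} to an inhomogeneous wave equation with an $L^2(Q)$ source term and homogeneous lateral and initial data, for which the classical well-posedness theory applies, and then to read off the mapping properties of $B_{a,q}$ from the equation itself.

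\emph{Existence and the bound \eqref{p6a}.} Given $(g,v_0,v_1)\in\mathcal H$, I would set $G:=\tau_0^{-1}(g,v_0,v_1)\in S$, so that $\Box G=0$ in $Q$, $\tau_0 G=(g,v_0,v_1)$, and, by the very definition of the norm on $\mathcal H$, $\norm{G}_{H^1(0,T;L^2(\Omega))}=\norm{(g,v_0,v_1)}_{\mathcal H}$. Looking for $u$ in the form $u=G+w$, the problem \eqref{eq1} becomes equivalent to
\[\Box w+a\pd_tw+qw=F:=-a\pd_tG-qG\ \textrm{ in }Q,\qquad w_{|\Sigma}=0,\quad w_{|t=0}=\pd_tw_{|t=0}=0.\]
Since $a,q\in L^\infty(Q)$ and $G\in H^1(0,T;L^2(\Omega))$, we have $F\in L^2(Q)$ with $\norm{F}_{L^2(Q)}\le C\norm{(g,v_0,v_1)}_{\mathcal H}$. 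I would then invoke the standard existence theory for the wave equation with an $L^2(Q)$ right-hand side and homogeneous data, the lower order terms $a\pd_tw+qw$ being absorbed either through a Galerkin scheme together with the energy identity obtained by multiplying by $\pd_tw$ and closing it by Grönwall's inequality, or through a contraction argument on $w\mapsto$ the solution of $\Box\tilde w=F-a\pd_tw-qw$ with vanishing data, so as to get a unique $w\in\mathcal C([0,T];H^1_0(\Omega))\cap\mathcal C^1([0,T];L^2(\Omega))$ with $\norm{w}_{\mathcal C([0,T];H^1_0(\Omega))}+\norm{\pd_tw}_{\mathcal C([0,T];L^2(\Omega))}\le C\norm{F}_{L^2(Q)}$. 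In particular $w\in H^1(0,T;L^2(\Omega))$, so $u=G+w$ is a weak solution of \eqref{eq1} and \eqref{p6a} follows from the triangle inequality and the two preceding bounds.

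\emph{Uniqueness.} If $u_1,u_2\in H^1(0,T;L^2(\Omega))$ both solve \eqref{eq1}, then $u:=u_1-u_2$ satisfies $L_{a,q}u=0$ in $Q$ and $\tau_0u=0$; since $\Box u=-a\pd_tu-qu\in L^2(Q)$, this places $u$ in $H_\Box(Q)$, and it remains to prove $u=0$. This is the only nonroutine point, because for $u\in H_\Box(Q)$ whose traces vanish merely in the negative order spaces of Section~2 the energy identity is not directly available, and moreover here $a$ is only $L^\infty$, so the formal adjoint $L^*_{a,q}\phi=\Box\phi-\pd_t(a\phi)+q\phi$ is not a classical wave operator. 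I would argue by a regularization/duality argument along the lines of \cite{Ki2}: pairing $L_{a,q}u=0$ with solutions $\phi$ of $\Box\phi-\pd_t(a\phi)+q\phi=\psi$, $\psi\in\mathcal C^\infty_0(Q)$, carrying homogeneous terminal and lateral data (this auxiliary problem being itself solved by a perturbation argument, since $a\phi\in L^2(Q)$ for $\phi\in H^1(0,T;L^2(\Omega))$), and integrating by parts in the sense of the extended traces $\tau_0,\tau_1$, so that every boundary contribution drops out and one is left with $\int_Qu\,\psi\,\d x\,\d t=0$ for all such $\psi$, whence $u=0$.

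\emph{Boundedness of $B_{a,q}$.} By the previous steps the solution $u$ of \eqref{eq1} obeys $\norm{u}_{H^1(0,T;L^2(\Omega))}\le C\norm{(g,v_0,v_1)}_{\mathcal H}$ and $\norm{\Box u}_{L^2(Q)}=\norm{a\pd_tu+qu}_{L^2(Q)}\le C\norm{u}_{H^1(0,T;L^2(\Omega))}$, hence $\norm{u}_{H_\Box(Q)}\le C\norm{(g,v_0,v_1)}_{\mathcal H}$. By \cite[Proposition 4]{Ki2}, $\tau_1$ is bounded from $H_\Box(Q)$ to $H^{-3}(0,T;H^{-\frac{3}{2}}(\partial\Omega))\times H^{-2}(\Omega)\times H^{-4}(\Omega)$; composing with the restriction to $V'$ of the first component and keeping the component at $t=T$ shows that $(g,v_0,v_1)\mapsto(\tau_{1,1}u_{|V},\tau_{1,2}u)$ is bounded from $\mathcal H$ to $H^{-3}(0,T;H^{-\frac{3}{2}}(V'))\times H^{-2}(\Omega)$, which is the assertion on $B_{a,q}$. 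The main obstacle in the whole argument is thus the uniqueness step just described, i.e. showing that a very weak solution whose data vanish only in the negative order trace spaces must itself vanish.
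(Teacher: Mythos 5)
Your proof follows essentially the same route as the paper: the same splitting $u=\tau_0^{-1}(g,v_0,v_1)+w$ reducing \eqref{eq1} to a problem with $L^2(Q)$ source and homogeneous initial and lateral data solved by the Lions--Magenes theory of \cite[Chapter 3, Section 8]{LM1}, and the same observation that $\Box u=-a\pd_t u-qu\in L^2(Q)$ places $u$ in $H_{\Box}(Q)$ so that the continuity of $\tau_1$ gives the boundedness of $B_{a,q}$. The only difference is that you make explicit the uniqueness question for very weak solutions whose traces vanish only in the negative-order spaces, which the paper leaves implicit (relying on the framework of \cite{Ki2}); your duality sketch is a reasonable way to close that point.
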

 \begin{proof} We split $u$ into two terms $u=v+\tau_0^{-1}(g,v_0,v_1)$ where $v$ solves
\bel{Eq1}\left\{ \begin{array}{rcll} \partial_t^2v-\Delta_x v+a\pd_tv+qv& = & (-a\pd_t-q)\tau_0^{-1}(g,v_0,v_1), & (t,x) \in Q ,\\ 

v_{\vert t=0}=\partial_t v_{\vert t=0}&=&0,&\\
 v_{\vert\Sigma}& = & 0.& \end{array}\right.
\ee
Since $(-a\pd_t-q)\tau_0^{-1}(g,v_0,v_1)\in L^2(Q)$, from  the theory developed in \cite[Chapter 3, Section 8]{LM1}, one can check that the IBVP \eqref{Eq1} admits a unique solution $v\in \mathcal C^1([0,T];L^2(\Omega))\cap \mathcal C([0,T];H^1_0(\Omega))$ satisfying
\bel{p6b}
\begin{aligned}\norm{v}_{\mathcal C^1([0,T];L^2(\Omega))}+\norm{v}_{\mathcal C([0,T];H^1_0(\Omega))} &\leq C\norm{(-a\pd_t-q)\tau_0^{-1}(g,v_0,v_1)}_{L^2(Q)}\\
\ &\leq C(\norm{q}_{L^\infty(Q)}+\norm{a}_{L^\infty(Q)})\norm{\tau_0^{-1}(g,v_0,v_1)}_{H^1(0,T;L^2(\Omega))}.\end{aligned}\ee
Therefore, $u=v+\tau_0^{-1}(g,v_0,v_1)$ is the unique solution of \eqref{eq1} and estimate \eqref{p6b} implies \eqref{p6a}. Now let us show the last part of the proposition. For this purpose fix $(g,v_0,v_1)\in\mathcal H$ and consider $u\in H^1(0,T;L^2(\Omega))$ the solution of \eqref{eq1}. Note first that  $(\partial_t^2-\Delta_x) u=-a\pd_tu-qu\in L^2(Q)$. Thus,  $u\in H_{\Box}(Q)$ and  $\tau_{1,1}u\in H^{-3}(0,T; H^{-\frac{3}{2}}(\partial\Omega))$, $\tau_{1,2}u \in H^{-2}(\Omega)$ with
\[\begin{aligned}\norm{\tau_{1,1}u}^2+\norm{\tau_{1,2}u}^2\leq C^2\norm{u}^2_{H_{\Box}(Q)}&=C^2(\norm{u}^2_{H^1(0,T;L^2(\Omega))}+\norm{a\pd_tu+qu}^2_{L^2(Q)})\\
\ &\leq C^2(1+2\norm{a}^2_{L^\infty(Q)}+2\norm{q}^2_{L^\infty(Q)})\norm{u}_{H^1(0,T;L^2(\Omega))}^2.\end{aligned}\]
Combining this with \eqref{p6a} we deduce that $B_{a,q}$ is a bounded operator from $\mathcal H$ to $H^{-3}(0,T; H^{-\frac{3}{2}}(V'))\times H^{-2}(\Omega)$.\end{proof}

From now on we consider the set $C_{a,q}^*$ to be the graph of the boundary operator $B_{a,q}$ given by
\[C_{a,q}^*=\{(g,v_0,v_1,B_{a,q}(g,v_0,v_1)):\ (g,v_0,v_1)\in \mathcal H\}.\]

\section{Carleman estimates}
This section will be devoted to the proof of a Carleman estimate with linear weight associated with \eqref{wave} which will be one of the main tools in our analysis. More precisely, we will consider the following.

\begin{Thm}\label{c1}  Let $\omega\in \mathbb S^{n-1}$, $a,\ q\in L^\infty(Q)$ and  $u\in\mathcal C^2(\overline{Q})$.  If $u$ satisfies the condition 
 \begin{equation}\label{ttc1}u_{\vert \Sigma}=0,\quad u_{\vert t=0}=\partial_tu_{\vert t=0}=0,\end{equation}
then there exists $\lambda_1>1$ depending only on  $\Omega$, $T$ and $M\geq \norm{q}_{L^\infty(Q)}+\norm{a}_{L^\infty(Q)}$ such that the estimate
\begin{equation}\label{c1a}\begin{array}{l}\lambda \int_\Omega e^{-2\lambda( T+\omega\cdot x)}\abs{\partial_tu_{\vert t=T}}^2dx+\lambda\int_{\Sigma_{+,\omega}}e^{-2\lambda(t+\omega\cdot x)}\abs{\partial_\nu u}^2\abs{\omega\cdot\nu(x) } d\sigma(x)dt
+\lambda^2\int_Qe^{-2\lambda(t+\omega\cdot x)}\abs{u}^2dxdt\\
+\int_Qe^{-2\lambda(t+\omega\cdot x)}(\abs{\nabla u}^2+\abs{\pd_tu}^2)dxdt
\leq C\left(\int_Qe^{-2\lambda(t+\omega\cdot x)}\abs{L_{a,q}u}^2dxdt+ \lambda^3\int_\Omega e^{-2\lambda(T+\omega\cdot x)}\abs{u_{\vert t=T}}^2dx\right)\\
\ \ \ +C\left(\lambda\int_\Omega e^{-2\lambda(T+\omega\cdot x)}\abs{\nabla_xu_{\vert t=T}}^2dx+\lambda\int_{\Sigma_{-,\omega}}e^{-2\lambda(t+\omega\cdot x)}\abs{\partial_\nu u}^2\abs{\omega\cdot\nu(x) }d\sigma(x)dt\right)\end{array}\end{equation}
holds true for $\lambda\geq \lambda_1$  with $C$  depending only on  $\Omega$, $T$ and $M\geq \norm{q}_{L^\infty(Q)}+\norm{a}_{L^\infty(Q)}$.
\end{Thm}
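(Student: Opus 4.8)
The plan is to establish the Carleman estimate by the standard conjugation-and-splitting method, but with a perturbed linear weight so that the first-order damping term can be absorbed by a convexity argument. First I would set $\varphi_\lambda(t,x) = t + \omega\cdot x + \lambda\beta(t)$ (or simply work with the linear weight $\varphi(t,x)=t+\omega\cdot x$ first and introduce the perturbation only where needed), put $v = e^{-\lambda\varphi}u$, and compute the conjugated operator $e^{-\lambda\varphi}\Box(e^{\lambda\varphi}v) = P_\lambda v$. Since $\varphi$ is linear, $P_\lambda v = \Box v + 2\lambda(\partial_t\varphi\,\partial_t v - \nabla\varphi\cdot\nabla v) + \lambda^2(\abs{\partial_t\varphi}^2 - \abs{\nabla\varphi}^2)v$; with $\abs{\partial_t\varphi}^2 = \abs{\nabla_x\varphi}^2 = 1$ the zeroth-order term in $\lambda^2$ vanishes, which is exactly the degeneracy that forces the perturbation of the weight. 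I would then split $P_\lambda v = P_+v + P_-v$ into (formally) symmetric and skew-symmetric parts, expand $\norm{P_\lambda v}_{L^2(Q)}^2 = \norm{P_+v}^2 + \norm{P_-v}^2 + 2(P_+v,P_-v)_{L^2(Q)}$, and integrate by parts in the cross term. The bulk (interior) positive contribution of order $\lambda^2\int\abs{v}^2$ and order $\lambda^0(\abs{\nabla v}^2+\abs{\partial_t v}^2)$ should come from the commutator together with the convexity of the perturbation $\beta$, chosen convex with $\beta''$ bounded below; this is where I would use the perturbed weight to create a genuinely positive zeroth-order commutator term absorbing both $\lambda\norm{v}^2$-size leftovers and, crucially, the damping term $a\,\partial_t u$.

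Next I would carefully track the boundary terms generated by the integrations by parts. On $\Sigma$, the condition $u_{|\Sigma}=0$ kills the tangential contributions and leaves a term proportional to $\lambda\int_\Sigma e^{-2\lambda\varphi}\abs{\partial_\nu u}^2(\omega\cdot\nu)\,d\sigma dt$, whose sign is $+$ on $\Sigma_{+,\omega}$ and $-$ on $\Sigma_{-,\omega}$; the negative part goes to the right-hand side, which is why only $\Sigma_{+,\omega}$ appears on the left. On the time slices, $u_{|t=0}=\partial_t u_{|t=0}=0$ eliminates all contributions at $t=0$, while at $t=T$ I expect terms of order $\lambda\int_\Omega e^{-2\lambda(T+\omega\cdot x)}\abs{\partial_t u_{|t=T}}^2$ (kept on the left, since $\partial_t\varphi=1>0$ gives it the favorable sign) together with terms in $\abs{u_{|t=T}}^2$ and $\abs{\nabla_x u_{|t=T}}^2$ of orders $\lambda^3$ and $\lambda$ respectively, which are moved to the right-hand side as the "data at $t=T$" contributions. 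Collecting these yields the estimate for $v$; undoing the substitution $v=e^{-\lambda\varphi}u$ and absorbing the extra $\lambda$-powers generated by differentiating $e^{\lambda\varphi}$ (which costs at most one power of $\lambda$ per derivative, hence is controlled by the $\lambda^2$ and $\lambda^0$ gains for large $\lambda$) gives the stated inequality for $u$.

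Finally, the lower-order terms $a\,\partial_t u + q\,u = L_{a,q}u - \Box u$ must be handled: writing $\norm{e^{-\lambda\varphi}\Box u}^2 \leq 2\norm{e^{-\lambda\varphi}L_{a,q}u}^2 + 2\norm{e^{-\lambda\varphi}(a\partial_t u + qu)}^2$ and then $\norm{e^{-\lambda\varphi}(a\partial_t u+qu)}^2 \leq 2M^2\int_Q e^{-2\lambda\varphi}(\abs{\partial_t u}^2+\abs{u}^2)$, I absorb the $\abs{\partial_t u}^2$ piece into the left-hand side $\int_Q e^{-2\lambda\varphi}\abs{\partial_t u}^2$ term (this is the delicate point — the convexity-induced gain on $\abs{\partial_t v}^2$ must dominate $M^2$, which is why $\lambda_1$ depends on $M$) and the $\abs{u}^2$ piece into the $\lambda^2\int_Q e^{-2\lambda\varphi}\abs{u}^2$ term for $\lambda$ large. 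The main obstacle is precisely this absorption of the first-order damping term: unlike the zeroth-order potential, $a\,\partial_t u$ has the same order as one of the terms we are trying to control on the left, so the Carleman estimate only closes because the perturbed-weight convexity argument produces a coefficient in front of $\int_Q e^{-2\lambda\varphi}\abs{\partial_t u}^2$ that is strictly positive and independent of $\lambda$ — large enough, after fixing the perturbation $\beta$, to beat $2M^2$. A standard density argument then removes the assumption $u\in\mathcal C^2(\overline Q)$ where needed, though the statement is already phrased for $\mathcal C^2$ solutions.
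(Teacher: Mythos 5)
Your proposal is correct and follows essentially the same route as the paper: conjugation with a perturbed linear weight (the paper takes $\phi_{\lambda,s}=\lambda(t+\omega\cdot x)-s t^{2}/2$ with $s$ a parameter \emph{independent} of $\lambda$, later fixed as $s=3\norm{a}_{L^\infty(Q)}^{2}+6$), splitting of the conjugated operator, integration by parts in the cross term with the same bookkeeping of boundary terms at $t=T$ and on $\Sigma_{\pm,\omega}$, and absorption of $a\partial_t$ through the order-$s$ gain on $\int|\partial_t v|^{2}$ produced by the quadratic perturbation. The only cosmetic difference is your tentative scaling $\lambda\beta(t)$ inside the weight; your own closing remark that the gain must be positive and independent of $\lambda$ points to the paper's actual choice.
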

 For $a=0$, the Carleman estimate \eqref{c1a} has already been established in \cite[Theorem 2]{Ki2} by applying some results of \cite{BJY}. In contrast to the equation without the damping coefficient ($a=0$),  due to the presence of a variable coefficient of order one, we can not  derive \eqref{c1a} from \cite[Theorem 2]{Ki2}. In order to establish this Carleman estimate, in a similar way to \cite{DKSU, KSU}, we need to perturb our linear weight in order to absorb  the damping coefficient. More precisely, we introduce a new parameter $s$ independent of $\lambda$ that will be precised later, we consider, for $\lambda>s>1$, the perturbed weight
\bel{phi}\phi_{\pm\lambda,s}(t,x):=\pm\lambda (t+\omega\cdot x)-s{t^2\over 2}\ee
and we define
\[P_{a,q,\lambda,s}:=e^{-\phi_{\lambda,s}}L_{a,q}e^{\phi_{\lambda,s}}.\]
\ \\
\textbf{Proof of Theorem \ref{c1}.} We fix $v=e^{-\phi_{\lambda,s}}u$ such that $$\int_Qe^{-2\phi_{\lambda,s}}|L_{a,q}u|^2dxdt=\int_Q|P_{a,q,\lambda,s}v|^2dxdt.$$
In the remaining part of this proof we  will systematically omit the subscripts $\lambda,s$ in $\phi_{\lambda,s}$ and without lost of generality we will assume that $u$ is real valued. Our first goal will be to establish for $\lambda$   sufficiently large and for suitable fixed value of $s$ depending on $\Omega$, $T$ and $\norm{a}_{L^\infty(Q)}+\norm{q}_{L^\infty(Q)}$, the estimate
\bel{7}\begin{aligned}\norm{P_{a,q,\lambda,s}v}^2_{L^2(Q)}\geq&{\lambda\over 4}\int_\Omega |\pd_tv(T,x)|^2dx+2\int_Q(|\pd_tv|^2+|\nabla_x v|^2)dxdt-7\lambda \int_\Omega |\nabla_xv(T,x)|^2dx\\
\ &+\lambda\int_\Sigma |\pd_\nu v|^2\omega\cdot\nu d\sigma(x)dt+5\lambda^2\int_Q|v|^2dxdt-6Ts\lambda^3\int_\Omega |v(T,x)|^2dx.\end{aligned}\ee
Then,  we will deduce \eqref{c1a}. We decompose $P_{a,q,\lambda,s}$ into three terms
\[P_{a,q,\lambda,s}=P_1+P_2+P_3,\]
with
\[P_1=\pd_t^2-\Delta_x+\Box \phi+|\pd_t\phi|^2-|\nabla_x\phi|^2,\quad P_2=2\pd_t\phi\pd_t-2\nabla_x\phi\cdot\nabla_x,\quad P_3=a\pd_t+a\pd_t\phi+q.\]
 Recall that
\[\pd_t\phi=\lambda-st,\quad \pd_t^2\phi=-s,\quad \nabla_x\phi=\lambda\omega,\]

\[\begin{aligned}P_1vP_2v=&2\pd_t^2v\pd_t\phi \pd_tv-2\pd_t^2v\nabla_x\phi\cdot\nabla_xv-2\Delta_xv\pd_t\phi \pd_tv+2\Delta_xv\nabla_x\phi\cdot\nabla_xv\\
\ &+(\Box \phi_{\lambda,s}+|\pd_t\phi|^2-|\nabla_x\phi|^2) vP_2v.\end{aligned}\]
We have
\[2\int_Q\pd_t^2v\pd_t\phi \pd_tvdxdt=\int_Q \pd_t\phi\pd_t|\pd_tv|^2dxdt=\int_\Omega \pd_t\phi(T,x)|\pd_tv(T,x)|^2dx-\int_Q\pd_t^2\phi|\pd_tv|^2dxdt.\]
For $\lambda>2sT$ we obtain 
\bel{1}2\int_Q\pd_t^2v\pd_t\phi \pd_tvdxdt\geq {\lambda\over 2}\int_\Omega |\pd_tv(T,x)|^2dx+s\int_Q|\pd_tv|^2dxdt.\ee
We have also
\[-2\int_Q\pd_t^2v\nabla_x\phi\cdot\nabla_xvdxdt=-2\lambda\int_\omega \pd_t v(T)\omega\cdot\nabla_xv(T)dx+\lambda\int_Q\omega\cdot\nabla_x(|\pd_tv|^2)dxdt\]
and using the fact that $v_{|\Sigma}=0$ we get
\[\int_Q\omega\cdot\nabla_x(|\pd_tv|^2)dxdt=\int_Q\textrm{div}_x(|\pd_tv|^2 \omega)dxdt=0,\]
\bel{2}-2\int_Q\pd_t^2v\nabla_x\phi\cdot\nabla_xvdxdt=-2\lambda\int_\Omega \pd_t v(T)\omega\cdot\nabla_xv(T)dx.\ee
In a similar way, we find
\[-2\int_Q\Delta_xv\pd_t\phi \pd_tvdx dt=\int_Q\pd_t\phi \pd_t|\nabla v|^2dx dt\]
and using the formula
\[\int_Q\pd_t\phi \pd_t|\nabla v|^2dx dt=\int_\Omega\pd_t\phi(T) |\nabla v|^2(T)dx dt -\int_Q\pd_t^2\phi |\nabla v|^2dx dt,\]
we obtain
\bel{3}-2\int_Q\Delta_xv\pd_t\phi \pd_tvdx dt=s\int_Q |\nabla v|^2dx dt+\int_\Omega\pd_t\phi(T) |\nabla v|^2(T)dx\geq  s\int_Q |\nabla v|^2dx dt+{\lambda\over 2}\int_\Omega |\nabla v|^2(T)dx.\ee
Moreover, in a similar way to \cite[Lemma 2.1]{BU}, we get
\bel{4}2\int_Q\Delta_xv\nabla_x\phi\cdot\nabla_xvdxdt=2\int_Q\Delta_xv\omega\cdot\nabla_xvdxdt=\lambda\int_\Sigma |\pd_\nu v|^2\omega\cdot\nu d\sigma(x)dt.\ee
Now note that
\[\Box \phi+|\pd_t\phi|^2-|\nabla_x\phi|^2=-s+(\lambda-st)^2-\lambda^2=s^2t^2-2\lambda st-s.\]
Then, we have
\[\begin{array}{l}\int_Q(\Box \phi_{\lambda,s}+|\pd_t\phi|^2-|\nabla_x\phi|^2) vP_2v\\
=\int_Q(s^2t^2-2\lambda st-s)\pd_t\phi\pd_t|v|^2dxdt-\int_Q(s^2t^2-2\lambda st-s)\textrm{div}(|v|^2\omega )dxdt\\
=\int_Q(-s^3t^3+3\lambda s^2t^2+(s^2-2\lambda^2s)t-s\lambda)\pd_t|v|^2dxdt\\
=-\int_Q[-3s^3t^2+6\lambda s^2t+(s^2-2\lambda^2s)]|v|^2dxdt+\int_\Omega(-s^3T^3+3\lambda s^2T^2+(s^2-2\lambda^2s)T-s\lambda)|v(T,x)|^2dx.\end{array}\]
Choosing $\lambda$ such that
\[\lambda>\max\left(s+6\lambda sT,s^2T^2+{\lambda\over T}\right)^{1/2},\]
we get
\bel{5}\int_Q(\Box \phi_{\lambda,s}+|\pd_t\phi|^2-|\nabla_x\phi|^2) vP_2v\geq \lambda^2s\int_Q|v|^2dxdt-3T\lambda^2s\int_\Omega |v(T,x)|^2dx.\ee
Combining estimates \eqref{1}-\eqref{5}, we obtain 
\bel{6}\begin{aligned}\norm{P_1v+P_2v}^2_{L^2(Q)}\geq2\int_QP_1vP_2v\geq&{\lambda\over 2}\int_\Omega |\pd_tv(T,x)|^2dx+2s\int_Q(|\pd_tv|^2+|\nabla_x v|^2)dxdt-7\lambda \int_\Omega |\nabla_xv(T,x)|^2dx\\
\ &+2\lambda\int_\Sigma |\pd_\nu v|^2\omega\cdot\nu d\sigma(x)dt+2s\lambda^2\int_Q|v|^2dxdt-6Ts\lambda^2\int_\Omega |v(T,x)|^2dx.\end{aligned}\ee
On the other hand, we have

\[\begin{aligned}\norm{P_{a,q,\lambda,s}v}^2_{L^2(Q)}&\geq {\norm{P_1v+P_2v}^2_{L^2(Q)}\over 2}-\norm{P_3v}_{L^2(Q)}^2\\
\ &\geq {\norm{P_1v+P_2v}^2_{L^2(Q)}\over 2}-3\norm{a}_{L^\infty(Q)}^2\int_Q|\pd_t v|^2dxdt-3\left(\norm{a}_{L^\infty(Q)}^2\lambda +\norm{q}_{L^\infty(Q)}^2\right)\int_Q| v|^2dxdt.\end{aligned}\]
Fixing $s=3\norm{a}_{L^\infty(Q)}^2+6$ and $\lambda >3\norm{q}_{L^\infty(Q)}^2+\max\left(s^2+6\lambda sT,4s^2T^2+{\lambda\over T}\right)^{1/2}$, we deduce \eqref{7} from \eqref{6}.
Armed with \eqref{7}, we will complete the proof of the Carleman estimate \eqref{c1a}. Note that, condition \eqref{ttc1} implies $\partial_\nu v_{\vert\Sigma}=e^{-\lambda(t+\omega\cdot x)}e^{{st^2\over 2}}\partial_\nu u_{\vert\Sigma}$ and we deduce that
\bel{8}\lambda\int_\Sigma |\pd_\nu v|^2\omega\cdot\nu d\sigma(x)dt\geq \lambda\int_{\Sigma_+} e^{-2\lambda(t+\omega\cdot x)}|\pd_\nu u|^2\omega\cdot\nu d\sigma(x)dt+e^{sT^2}\lambda\int_{\Sigma_-} e^{-2\lambda(t+\omega\cdot x)}|\pd_\nu u|^2\omega\cdot\nu d\sigma(x)dt.\ee
Moreover, using the fact that
\[\partial_tu=\partial_t(e^{\phi} v)=(\lambda-st) u+e^{\lambda(t+\omega\cdot x)}e^{-{st^2\over 2}} \partial_tv,\quad \nabla_x v=e^{-\phi}(\nabla_x u-\lambda u\omega),\]
we obtain
\[\int_Qe^{-2\lambda(t+\omega\cdot x)}(|\pd_tu|^2+|\nabla_x u|^2)dxdt\leq 4\lambda^2\int_Q|v|^2dxdt+2\int_Q(|\pd_tv|^2+|\nabla_x v|^2)dxdt,\]
\[\int_\Omega e^{-2\lambda(T+\omega\cdot x)}\abs{\partial_tu(T,x)}^2dx\leq 2\int_\Omega \abs{\partial_tv(T,x)}^2dx+2\lambda^2\int_\Omega e^{-2\lambda(T+\omega\cdot x)}\abs{u(T,x)}^2dx,\]
\[\int_\Omega \abs{\nabla_x v(T,x)}^2dx\leq 2\lambda^2e^{sT^2}\int_\Omega e^{-2\lambda(T+\omega\cdot x)}\abs{u(T,x)}^2dx+2e^{sT^2}\int_\Omega e^{-2\lambda(T+\omega\cdot x)}\abs{\nabla_x u(T,x)}^2dx.\]
Combining these estimates with \eqref{7}, \eqref{8}, we get
\bel{9}\begin{array}{l}\int_Qe^{-2\lambda(t+\omega\cdot x)}(|\pd_tu|^2+|\nabla_x u|^2)dxdt+\lambda^2\int_Qe^{-2\lambda(t+\omega\cdot x)}|u|^2dxdt\\
\ \\
+{\lambda\over 8}\int_\Omega e^{-2\lambda(T+\omega\cdot x)}\abs{\partial_tu(T,x)}^2dx
+\lambda\int_{\Sigma_+} e^{-2\lambda(t+\omega\cdot x)}|\pd_\nu u|^2\omega\cdot\nu d\sigma(x)dt\\
\ \\
\leq (6Ts+14e^{sT^2}+1)\lambda^3\int_\Omega e^{-2\lambda(T+\omega\cdot x)}\abs{u(T,x)}^2dx+14\lambda e^{sT^2}\int_\Omega e^{-2\lambda(T+\omega\cdot x)}\abs{\nabla_x u(T,x)}^2dx\\
\ \\
+e^{sT^2}\lambda\int_{\Sigma_-} e^{-2\lambda(t+\omega\cdot x)}|\pd_\nu u|^2|\omega\cdot\nu| d\sigma(x)dt+e^{sT^2}\int_Qe^{-2\lambda(t+\omega\cdot x)}|L_{a,q}u|^2dxdt.\end{array}\ee
From this last estimate we deduce \eqref{c1a}. 
\qed
\begin{rem}\label{rr} By density, estimate \eqref{c1a} can be extended to any function $u\in\mathcal C^1([0,T]; L^2(\Omega))\cap \mathcal C([0,T]; H^1(\Omega))$ satisfying \eqref{ttc1}, $(\partial_t^2-\Delta_x)u\in L^2(Q)$ and $\partial_\nu u\in L^2(\Sigma)$.\end{rem} 
Now that our Carleman estimate is proved we will extend it into  Sobolev space of negative order and apply it to construct our GO solutions. But first let us  define the principal part of our GO solutions that will allow us to recover  the damping coefficient. For this purpose we need some suitable approximation of our damping coefficients.
\section{Approximation of the damping coefficients}

Let us first remark that the GO solutions that we use for the  recovery of the damping coefficients should depend explicitly on the damping coefficients. In order to avoid additional smoothness assumptions on the class of admissible coefficients, in a similar way to \cite{Ki4,Sa1}, we consider 	GO solutions depending on some smooth approximation of the damping coefficients instead of the damping coefficients themselves. The main purpose of this section is to define our choice for the smooth approximation of the damping coefficients and to introduce the part of our GO solutions that will be used for the recovery of the damping coefficient.  From now on we fix the coefficients $a_1,a_2\in W^{1,p}(Q)$, with $p>n+1$, and $q_1,q_2\in L^\infty(Q)$. Moreover, we assume that
\bel{conda} a_1(t,x)=a_2(t,x),\quad (t,x)\in \pd Q.\ee
For all $r>0$ we define $B_r:=\{(t,x)\in\R^{1+n}:\ |(t,x)|<r\}$. Then, according to \cite[Theorem 5, page 181]{[St]}, fixing $R>0$ such that $\overline{Q}\subset B_R$, we can introduce $\tilde{a}_1\in W^{1,p}(\R^{1+n})$, with supp$\tilde{a}_1\subset B_R$ such that
$$\tilde{a}_1(t,x)=a_1(t,x),\quad (t,x)\in Q.$$
We define also $\tilde{a}_2$ by 
$$\tilde{a}_2(t,x)=\left\{\begin{array}{l} a_2(t,x),\quad (t,x)\in Q\\ \tilde{a}_1(t,x),\quad (t,x)\in \R^{1+n}\setminus Q.\end{array}\right.$$
Then, in view of \eqref{conda}, we have $\tilde{a}_2\in W^{1,p}(\R^{1+n})$ and for $a=a_2-a_1$ extended by zero outside $Q$ we have
\bel{condb}\tilde{a}_2-\tilde{a}_1=a.\ee
Now let us fix $\chi\in\mathcal C^\infty_0(\R^{1+n})$ such that $\chi\geq0$, $\int_{\R^{1+n}}\chi(t,x)dtdx=1$, supp$\chi\subset B_1$, and let us define $\chi_\lambda$ by
$\chi_\lambda(t,x)=\lambda^{{n+1\over 3}}\chi(\lambda^{{1\over3}}t,\lambda^{{1\over3}}x)$
and, for $j=1,2$, we fix
$$a_{j,\lambda}(t,x):=\chi_\lambda*\tilde{a}_j(t,x)=\int_{\R^{1+n}}\chi_\lambda(t-\tau,x-y)\tilde{a}_j(\tau,y)dyd\tau.$$
For $j=1,2$, since $\tilde{a}_j\in W^{1,p}(\R^{1+n})$ with $p>n+1$, by the Sobolev embedding theorem (e.g.  \cite[Theorem 1.4.4.1]{Gr}) we have $\tilde{a}_j\in \mathcal C^{\alpha}(\R^{1+n})$ with $\alpha=1-{n+1\over p}$ and $\mathcal C^{\alpha}(\R^{1+n})$ the set of $\alpha$-H\"olderian functions. Thus, one can check that
\bel{a1a}\norm{a_{j,\lambda}-\tilde{a}_j}_{L^\infty(\R^{1+n})}\leq C\lambda^{-{\alpha \over3}},\ee
\bel{a1b}\norm{a_{j,\lambda}}_{W^{k,\infty}(\R^{1+n})}\leq C_k\lambda ^{{k\over 3}},\quad \norm{a_{j,\lambda}}_{H^k(\R^{1+n})}\leq C_k\lambda ^{{k-1\over 3}}\quad k\geq2,\ee
with $C$ and $C_k$ independent of $\lambda$. In view of \eqref{condb},  we remark that $a_{\lambda}:=\chi_\lambda*a=a_{2,\lambda}-a_{1,\lambda}$.
Then, for $\zeta\in (1,-\omega)^\bot:=\{(t,x)\in\R\times\R^{n}:\ t-\omega\cdot x=0\}$, we fix 
\bel{conde1} b_{1,\lambda}(t,x)=e^{-i\zeta\cdot(t,x)}\exp\left(-{\int_0^{+\infty} a_{1,\lambda}((t,x)+s(1,-\omega))ds\over 2}\right),\ee
\bel{conde2} b_{2,\lambda}(t,x)=\exp\left({\int_0^{+\infty} a_{2,\lambda}((t,x)+s(1,-\omega))ds\over 2}\right).\ee
According to \eqref{a1a}-\eqref{a1b} and to the fact that, for $j=1,2$, supp$a_{j,\lambda}\subset B_{R+1}$, we have
\bel{condf}\norm{b_{j,\lambda}}_{W^{k,\infty}(\R^{1+n})}\leq C\lambda ^{{k\over3}},\quad \norm{b_{j,\lambda}}_{H^k(Q)}\leq C\lambda ^{{k-1\over3}},\quad k\geq1\ee
and
\bel{condg} \norm{(2\pd_t-2\omega\cdot\nabla_x-a_1)b_{1,\lambda}}_{L^2(Q)}\leq C\lambda^{-{\alpha\over3}},\ \norm{(2\pd_t-2\omega\cdot\nabla_x+a_2)b_{2,\lambda}}_{L^2(Q)}\leq C\lambda^{-{\alpha\over3}} .\ee
Note that  $b_{1,\lambda}$ and $b_{2,\lambda}$ are respectively some smooth approximation of the functions
$$ b_{1}(t,x)=e^{-i\zeta\cdot(t,x)}e^{-{\int_0^{+\infty} \tilde{a}_{1}((t,x)+s(1,-\omega))ds\over 2}},\quad b_{2}(t,x)=e^{{\int_0^{+\infty} \tilde{a}_{2}((t,x)+s(1,-\omega))ds\over 2}},$$
which are respectively a solution of the transport equations
$$2\pd_tb_1-2\omega\cdot\nabla_xb_1-a_1b_1=0,\quad 2\pd_tb_2-2\omega\cdot\nabla_xb_2+a_2b_2=0,\quad \textrm{in } Q.$$
Recall that here $\tilde{a}_j$, $j=1,2$, is the extension of the damping coefficients $a_j$ defined at the beginning of this section. By replacing in  our GO solutions the functions $b_1$, $b_2$, whose regularity depends on the one  of the coefficients $a_1$ and $a_2$, with their approximation $b_{1,\lambda}$, $b_{2,\lambda}$,  we can weaken the regularity  assumption imposed on admissible damping coefficients from $W^{2,\infty}(Q)$ to $W^{1,p}(Q)$. This approach requires also less information about the damping coefficients on $\partial Q$. More precisely, if in our construction  we use the expression $b_j$ instead of  $b_{j,\lambda}$, $j=1,2$,  then, following our strategy, we can prove Theorem \ref{thm1}  only for damping coefficients $a_1,a_2\in W^{2,\infty}(Q)$  satisfying
$$\partial_t^k\partial_x^\alpha a_1(t,x)=\partial_t^k\partial_x^\alpha a_2(t,x),\quad (t,x)\in\partial Q,\ k\in\mathbb N,\ \alpha\in\mathbb N^n,\ k+|\alpha|\leq1,$$
which is  a more restrictive condition than \eqref{thm1bb}.
We mention also that this approach has already been considered by some authors in the context of recovery of non-smooth first order coefficients appearing in elliptic equations (see for instance \cite{Sa1}).

From now on by  using the tools introduced in this section, we will construct two kind of solutions of  equations of the from \eqref{wave}. We consider first solutions $u_1\in H^1(Q)$  of $L_{-a_1,q_1}u_1=0$ taking the form
\[u_1(t,x)=e^{-\lambda(t+x\cdot\omega)}(b_{1,\lambda}(t,x)+R_1(t,x))\]
and solutions $u_2\in H^1(Q)$  of the equation $$L_{a_2,q_2}u_2=0,\ \textrm{in }Q,\quad  u_2(0,x)=0,\ x\in\Omega,$$  taking the form
\[u_2(t,x)=e^{\lambda(t+x\cdot\omega)}(b_{2,\lambda}(t,x)+R_2(t,x)).\]
Here $R_j$, $j=1,2$, denotes the remainder term in the expression of the solution $u_j$ with respect to the parameter $\lambda$ in such a way that there exists $\mu\in (0,1)$ such that  $\lambda\norm{R_j}_{L^2(Q)}+ \norm{R_j}_{H^1(Q)}\leq C\lambda^{\mu}$.

\section{Goemtric optics solutions }
In this section we consider exponentially decaying solutions $u_1\in H^1(Q)$ of the equation $(\pd_t^2-\Delta_x-a_1\pd_t +q_1)u_1=0$ in $Q$ taking the form
\bel{GO1} u_1(t,x)=e^{-\lambda(t+x\cdot\omega)}(b_{1,\lambda}(t,x)+w_1(t,x)),\ee
and exponentially growing solution $u_2\in H^1(Q)$ of the equation $(\pd_t^2-\Delta_x+a_2\pd_t +q_2)u_2=0$ in $Q$ taking the form
\bel{GO2} u_2(t,x)=e^{\lambda (t+x\cdot\omega)}(b_{2,\lambda}(t,x)+w_2(t,x))\ee
where  $\lambda>1$, $\omega\in\mathbb S^{n-1}:=\{y\in\R^n:\ |y|=1\}$ and the term $w_j\in H^1(Q)$, $j=1,2$, satisfies
\bel{CGO11}\norm{w_j}_{H^1(Q)}+\lambda\norm{w_j}_{L^2(Q)}\leq C\lambda^{{3-\alpha\over3}},\ee
with $C$ independent of  $\lambda$.
We summarize these results in the following way.
\begin{prop}\label{p1} There exists $\lambda_2>\lambda_1$ such that for $\lambda>\lambda_2$ we can find a solution $u_1\in H^1(Q)$ of $L_{-a_1,q_1}u_1=0$ in $Q$ taking the form \eqref{GO1} 
with $w_1\in H^1(Q)$ satisfying \eqref{CGO11} for $j=1$.\end{prop}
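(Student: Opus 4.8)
The plan is to construct the solution $u_1$ of \eqref{GO1} by producing the remainder term $w_1\in H^1(Q)$ via a duality/Hahn--Banach argument based on a Carleman estimate in negative Sobolev order. First I would compute the equation satisfied by $w_1$: substituting \eqref{GO1} into $L_{-a_1,q_1}u_1=0$ and conjugating by $e^{-\lambda(t+x\cdot\omega)}$ yields, after using $|\nabla_x(t+x\cdot\omega)|^2=|\pd_t(t+x\cdot\omega)|^2=1$ so the leading $\lambda^2$ terms cancel, an equation of the form
\[
e^{-\lambda(t+x\cdot\omega)}L_{-a_1,q_1}\bigl(e^{-\lambda(t+x\cdot\omega)}w_1\bigr)=-F_\lambda,\qquad
F_\lambda:=e^{-\lambda(t+x\cdot\omega)}L_{-a_1,q_1}\bigl(e^{-\lambda(t+x\cdot\omega)}b_{1,\lambda}\bigr).
\]
The key point is that the worst term in $F_\lambda$ is $2\lambda(\pd_t-\omega\cdot\nabla_x)b_{1,\lambda}-\lambda a_1 b_{1,\lambda}$, whose $L^2(Q)$ norm is $O(\lambda^{1-\alpha/3})$ by the transport estimate \eqref{condg}; the remaining terms ($\Box b_{1,\lambda}$, $-a_1\pd_tb_{1,\lambda}$, $q_1 b_{1,\lambda}$) are controlled by \eqref{condf} and are of lower or comparable order, so $\|F_\lambda\|_{L^2(Q)}\le C\lambda^{1-\alpha/3}$.

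Next I would invoke the Carleman estimate of Section 3 (Theorem \ref{c1}), applied to the adjoint operator, in the dual (negative-order) formulation promised in Section 5. Concretely, the estimate \eqref{c1a} with the weight $e^{-2\lambda(t+\omega\cdot x)}$ (with a sign choice adapted to exponentially decaying solutions, i.e. using $+\omega$ or $-\omega$ and the boundary term on $\Sigma_{-,\omega}$) gives, for test functions vanishing suitably on $\pd Q$, a lower bound
\[
\|z\|_{L^2(Q)}\le \frac{C}{\lambda}\,\bigl\|e^{-\lambda(t+\omega\cdot x)}L_{-a_1,q_1}^*\bigl(e^{\lambda(t+\omega\cdot x)}z\bigr)\bigr\|_{L^2(Q)}.
\]
By the standard Hahn--Banach / Riesz argument (solvability from a Carleman estimate: define a linear functional on the range of the conjugated adjoint operator, extend it, represent it), this produces $w_1\in L^2(Q)$ solving the conjugated equation with $\|w_1\|_{L^2(Q)}\le C\lambda^{-1}\|F_\lambda\|_{L^2(Q)}\le C\lambda^{-\alpha/3}$. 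To upgrade to the $H^1$ bound $\|w_1\|_{H^1(Q)}\le C\lambda^{1-\alpha/3}$ in \eqref{CGO11}, I would either build the solvability estimate directly in $H^1$ (using the gradient terms $\int_Q e^{-2\lambda(\cdot)}(|\nabla u|^2+|\pd_t u|^2)$ already present on the left side of \eqref{c1a}) or bootstrap: once $w_1\in L^2$ solves the PDE, the interior elliptic/hyperbolic energy estimate combined with the explicit form of the equation gives $\|\nabla_{t,x}w_1\|_{L^2(Q)}\le C(\lambda\|w_1\|_{L^2(Q)}+\|F_\lambda\|_{L^2(Q)})\le C\lambda^{1-\alpha/3}$, which is exactly $\lambda^{(3-\alpha)/3}$.

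The main obstacle I anticipate is the functional-analytic passage from the Carleman estimate to an $H^1$-bounded solution: the Carleman estimate of Theorem \ref{c1} is stated for $\mathcal C^2(\overline Q)$ functions with full homogeneous Cauchy data \eqref{ttc1}, so one must (i) transpose it correctly to the adjoint problem, tracking which boundary/initial/final data must vanish for the test functions and which data of $w_1$ are then \emph{free} (this is why $u_2$ in Section 4 is required only to satisfy $u_2(0,\cdot)=0$ and not a Dirichlet condition), and (ii) handle the loss of one derivative — the source $F_\lambda$ sits in $L^2$ but to get $w_1\in H^1$ one needs the Carleman estimate to control the $H^1$ norm of $w_1$ by the $L^2$ norm of the source, which forces one to carry the gradient terms through the duality argument rather than discarding them. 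This is precisely the point alluded to in the introduction with the reference to \cite{DKSU,KSU}: a Carleman estimate in $H^{-1}$ (negative order) for the source is needed so that, dually, one recovers $w_1$ in $H^1$. Once this is set up, choosing $\lambda_2>\lambda_1$ large enough that the mollification estimates \eqref{a1a}--\eqref{condg} and the absorption of the zeroth-order coefficient $q_1$ are valid completes the construction.
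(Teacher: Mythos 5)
Your overall strategy coincides with the paper's: derive the conjugated equation for $w_1$, bound the source by $C\lambda^{(3-\alpha)/3}$ using \eqref{condf}--\eqref{condg}, and produce $w_1$ by a Hahn--Banach duality argument from a Carleman estimate of the form $\norm{v}_{L^2(\R^{1+n})}\leq C\norm{P_{a_1,\omega,\lambda}v}_{H^{-1}_\lambda(\R^{1+n})}$, which dually yields a right inverse bounded from $L^2(Q)$ into $H^1(Q)$ with the gain $\lambda\norm{\cdot}_{L^2}$; the paper then treats the $q_1w_1$ term by a fixed point, which is equivalent to your absorption remark. However, there is a genuine gap precisely at the crux: you never say how the negative-order Carleman estimate is to be proved in the presence of the first-order term $a_1\pd_t$ with $a_1$ only in $W^{1,p}(Q)$. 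This is the entire novelty relative to the $a=0$ case already covered by \cite{Ki2}. The paper needs two ingredients: (i) a convexified weight $\phi_{\lambda,s}=\lambda(t+\omega\cdot x)-st^2/2$, with $s$ chosen large depending on $\norm{a_1}_{W^{1,p}}$, so that the positive commutator produces the term $s^{1/2}(\norm{\nabla w}_{L^2}+\norm{\pd_tw}_{L^2}+\lambda\norm{w}_{L^2})$ capable of absorbing the damping contribution; and (ii) Lemma \ref{l7}, asserting that multiplication by a compactly supported $a\in W^{1,p}$, $p>n+1$, is bounded on $H^{-1}_\lambda(\R^{1+n})$ uniformly in $\lambda$ with norm $C\norm{a}_{W^{1,p}}$, proved by duality from the corresponding $H^1_\lambda$ bound --- this is exactly where the hypothesis $p>n+1$ enters. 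Without (ii) the term $\norm{a_1\pd_t\left\langle D,\lambda\right\rangle w}_{H^{-1}_\lambda}$ cannot even be estimated, and without (i) it cannot be absorbed; a purely linear weight does not suffice.

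A second, smaller objection concerns your fallback route: obtaining $w_1\in L^2(Q)$ from the $L^2$-to-$L^2$ Carleman estimate of Theorem \ref{c1} and then ``bootstrapping'' the $H^1$ bound by an interior elliptic/hyperbolic energy estimate would fail. The operator is hyperbolic, not elliptic, and the function produced by the Hahn--Banach argument carries no prescribed boundary, initial or final data, so no energy estimate is available for it. The only workable route --- and the one the paper takes --- is to prove the Carleman estimate with the source measured in $H^{-1}_\lambda$ from the outset, via the pseudodifferential conjugation $\left\langle D,\lambda\right\rangle^{-1}(P_{1}+P_{2})\left\langle D,\lambda\right\rangle$, the symbolic remainder bound, and the cutoff argument localizing back to $Q$. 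Your closing remark shows you recognize that a negative-order estimate is what is needed, but the proposal treats its proof as a formality rather than supplying the two ingredients above, which are the substance of Lemmas \ref{l1} and \ref{l7}.
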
 
\begin{prop}\label{p3} There exists $\lambda_3>\lambda_2$ such that for $\lambda>\lambda_3$ we can find a solution  $u_2\in H^1(Q)$ of $L_{a_2,q_2}u_2=0$ in $Q$ taking the form \eqref{GO2}
with $w_2\in H^1(Q)$ satisfying \eqref{CGO11} for $j=2$.\end{prop}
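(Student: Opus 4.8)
The plan is to construct the geometric optics solutions of Proposition~\ref{p1} and Proposition~\ref{p3} by reducing the construction of the remainder term $w_j$ to the solvability of an inhomogeneous wave equation with a source term that is small in a suitable negative-order Sobolev norm, and then to solve that equation using a Carleman estimate dual to the one in Theorem~\ref{c1}. I will treat Proposition~\ref{p1} in detail; Proposition~\ref{p3} is entirely analogous after reversing the sign of $\lambda$ in the weight and interchanging the roles of the two faces $\Sigma_{\pm,\omega}$ of the lateral boundary.

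First I would plug the ansatz $u_1(t,x)=e^{-\lambda(t+x\cdot\omega)}(b_{1,\lambda}+w_1)$ into the equation $L_{-a_1,q_1}u_1=0$. Using $e^{\lambda(t+x\cdot\omega)}L_{-a_1,q_1}e^{-\lambda(t+x\cdot\omega)}=\Box -2\lambda(\pd_t-\omega\cdot\nabla_x)-a_1\pd_t+\lambda a_1+q_1$ and the fact that the $\lambda^2$-terms cancel, a direct computation shows that $w_1$ must solve an equation of the form $(\Box-2\lambda(\pd_t-\omega\cdot\nabla_x)-a_1\pd_t+\lambda a_1+q_1)w_1=F_\lambda$, where the source $F_\lambda=-\lambda(2\pd_t-2\omega\cdot\nabla_x-a_1)b_{1,\lambda}-(\Box -a_1\pd_t+q_1)b_{1,\lambda}$. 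By the transport estimate \eqref{condg} the first, most dangerous, contribution has $L^2(Q)$ norm bounded by $C\lambda^{1-\alpha/3}$, while the second contribution is controlled by \eqref{condf} as $C\lambda^{1/3}$; hence $\norm{F_\lambda}_{L^2(Q)}\leq C\lambda^{1-\alpha/3}$. I would then look for $w_1$ vanishing (together with $\pd_tw_1$) at $t=0$ and with $w_1=0$ on $\Sigma$, so that $u_1$ is an admissible exponentially decaying GO solution.

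The core analytic step is the existence of $w_1$. Here I would derive, by a standard duality argument, a solvability statement from Theorem~\ref{c1}: for the adjoint operator $L_{-a_1,q_1}^*$ (which has the same principal and first-order structure up to signs, so the same Carleman estimate holds, with the weight $e^{-2\lambda(t+\omega\cdot x)}$ replaced by $e^{2\lambda(t+\omega\cdot x)}$ and the time-orientation reversed), Theorem~\ref{c1} applied to test functions vanishing on $\Sigma$ and at $t=T$ yields, for $\lambda\geq\lambda_1$, an a priori bound of the form $\lambda\norm{e^{\lambda(t+\omega\cdot x)}\psi}_{L^2(Q)}\leq C\norm{e^{\lambda(t+\omega\cdot x)}L^*\psi}_{L^2(Q)}$ after absorbing boundary terms of the right sign. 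This estimate, via the Hahn--Banach theorem and the Riesz representation theorem applied to the linear functional $\psi\mapsto\int_Q F_\lambda\,\overline{\psi}\,dxdt$ on the range of $L^*$, produces $w_1\in L^2(Q)$ with $\Box w_1\in L^2(Q)$, solving the equation, with the boundary and initial conditions encoded by the choice of test space, and satisfying $\lambda\norm{w_1}_{L^2(Q)}\leq C\norm{F_\lambda}_{L^2(Q)}\leq C\lambda^{1-\alpha/3}$. To upgrade this $L^2$ bound to the $H^1(Q)$ bound in \eqref{CGO11}, I would feed $w_1$ back into its own equation to control $\Box w_1$ in $L^2(Q)$ and then invoke the full strength of the Carleman estimate \eqref{c1a}, whose left-hand side also dominates $\int_Q e^{-2\lambda(t+\omega\cdot x)}(\abs{\nabla w_1}^2+\abs{\pd_tw_1}^2)$; since the relevant weights are comparable up to constants on the bounded set $\overline Q$, this gives $\norm{w_1}_{H^1(Q)}\leq C\lambda^{1-\alpha/3}=C\lambda^{(3-\alpha)/3}$, which is \eqref{CGO11}.

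The main obstacle, and the place where the present setting genuinely differs from the potential-only case of \cite{Ki2,Ki3}, is the loss of a power of $\lambda$ caused by the variable first-order coefficient: the term $\lambda a_1 b_{1,\lambda}$ in $F_\lambda$ is of order $\lambda$, not $O(1)$, and it is only the transport equation satisfied by $b_{1,\lambda}$ that tames it, at the cost of the mollification error $\lambda^{-\alpha/3}$ from \eqref{a1a}--\eqref{condg}. One must therefore be careful that the Carleman estimate gains exactly one power of $\lambda$ (the factor $\lambda^2$ in front of $\int_Q e^{-2\lambda(t+\omega\cdot x)}\abs{u}^2$ in \eqref{c1a}) so that the net bound on $\lambda\norm{w_1}_{L^2(Q)}$ is $O(\lambda^{1-\alpha/3})$, which is $o(\lambda)$ and hence negligible against the leading amplitude $b_{1,\lambda}$ of size $O(1)$ — this is precisely why the threshold $\lambda_2$ (resp. $\lambda_3$) is needed and why $\mu=1-\alpha/3\in(0,1)$ works. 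A secondary technical point is justifying the negative-order/duality framework rigorously, i.e.\ that the Carleman estimate, a priori stated in $\mathcal C^2(\overline Q)$ and extended in Remark~\ref{rr}, applies to the test functions entering the duality argument and that the resulting weak solution $w_1$ indeed has the claimed trace properties; this is handled exactly as in \cite{DKSU,KSU}, to which the construction is modelled.
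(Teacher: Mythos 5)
Your reduction of the problem to solving a conjugated equation for the remainder, with source of size $O(\lambda^{(3-\alpha)/3})$ thanks to the transport estimate \eqref{condg}, matches the paper, and the duality/Hahn--Banach scheme is the right general idea. But there is a genuine gap at the decisive step, namely how you obtain the $H^1(Q)$ half of \eqref{CGO11}. The $L^2$--$L^2$ Carleman estimate that you extract from Theorem \ref{c1} by testing against $\mathcal C^\infty_0$ functions only yields, by duality, a solution $w_j\in L^2(Q)$ with $\lambda\norm{w_j}_{L^2(Q)}\leq C\norm{F_\lambda}_{L^2(Q)}$; your proposed bootstrap to $H^1$ does not go through. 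First, the weak solution produced by Hahn--Banach is a priori only in $L^2(Q)$, so $\pd_t w_j$ and $\nabla_x w_j$ are merely distributions and you cannot assert $\Box w_j\in L^2(Q)$ from the equation (the conjugated operator feeds $\lambda(\pd_t-\omega\cdot\nabla_x)w_j$ back into $\Box w_j$); hence neither \eqref{c1a} nor its extension in Remark \ref{rr} applies to $w_j$. Second, even granting the regularity, the right-hand side of \eqref{c1a} contains the uncontrolled quantities $\lambda^3\int_\Omega e^{-2\lambda(T+\omega\cdot x)}|u_{|t=T}|^2$, $\lambda\int_\Omega e^{-2\lambda(T+\omega\cdot x)}|\nabla_x u_{|t=T}|^2$ and the integral over $\Sigma_{-,\omega}$, none of which vanish or are estimated for the remainder (and you cannot impose data at both $t=0$ and $t=T$ for a second-order hyperbolic equation). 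Third, the alternative of running a standard energy estimate on the initial-boundary value problem for $w_j$ fails because the conjugated operator has first-order coefficients of size $\lambda$, so Gronwall produces a constant $e^{C\lambda T}$ rather than the polynomial bound $C\lambda^{(3-\alpha)/3}$.

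This is exactly the point where the paper departs from the potential-only case: it proves a Carleman estimate in the \emph{shifted negative-order} space $H^{-1}_\lambda(\R^{1+n})$ (Lemmas \ref{l1} and \ref{l8}, estimates \eqref{car2} and \eqref{l8a}), obtained by conjugating the decomposition $P_1+P_2+P_3$ with $\left\langle D,\lambda\right\rangle^{\pm1}$, using the perturbed weight $\phi_{\pm\lambda,s}$ and the convexity parameter $s$ to absorb the damping term, together with the multiplier bound of Lemma \ref{l7} for $W^{1,p}$ coefficients acting on $H^{-1}_\lambda$. Duality against an $H^{-1}_\lambda\to L^2$ estimate places the solution in $H^{1}_\lambda(\R^{1+n})$, which is precisely the simultaneous bound $\lambda\norm{\cdot}_{L^2}+\norm{\cdot}_{H^1}\leq C\norm{F}_{L^2}$ of \eqref{l1bisb} and \eqref{l3b}; the fixed point argument then closes the construction. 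None of this machinery appears in your proposal, and the reference to \cite{DKSU,KSU} as a ``secondary technical point'' is in fact the heart of the proof. Note also that the paper imposes no boundary or initial conditions on $w_2$ (the solution of Lemma \ref{l3} is simply a restriction to $Q$ of a function on $\R^{1+n}$), so your insistence on vanishing traces for the remainder is both unnecessary and unavailable in this scheme.
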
 
\subsection{Exponentially decaying solutions}
This subsection is devoted to the proof of Proposition \ref{p1}. To construct the   exponentially decaying solutions $u_1\in H^1(Q)$ of the form \eqref{GO1}  we first introduce some preliminary tools and a suitable Carleman estimate in Sobolev space of negative order. More precisely, before starting the proof of Proposition \ref{p1}, we introduce some preliminary notations and we consider three intermediate results including a new Carleman estimate. In a similar way to \cite{BJY}, for all $m\in\R$, we introduce the space $H^m_\lambda(\R^{1+n})$ defined by
\[H^m_\lambda(\R^{1+n})=\{u\in\mathcal S'(\R^{1+n}):\ (|(\tau,\xi)|^2+\lambda^2)^{m\over 2}\hat{u}\in L^2(\R^{1+n})\},\]
with the norm
\[\norm{u}_{H^m_\lambda(\R^{1+n})}^2=\int_\R\int_{\R^n}(|(\tau,\xi)|^2+\lambda^2)^{m}|\hat{u}(\tau,\xi)|^2 d\xi d\tau.\]
Here for all tempered distribution $u\in \mathcal S'(\R^{1+n})$, we denote by $\hat{u}$ the Fourier transform of $u$ which, for $u\in L^1(\R^{1+n})$, is defined by
$$\hat{u}(\tau,\xi):=\mathcal Fu(\tau,\xi):= (2\pi)^{-{n+1\over2}}\int_{\R^{1+n}}e^{-it\tau-ix\cdot \xi}u(t,x)dtdx.$$
From now on, for $m\in\R$, $\tau\in\R$ and $\xi\in \R^n$,  we set $$\left\langle (\tau,\xi),\lambda\right\rangle=(\tau^2+|\xi|^2+\lambda^2)^{1\over2}$$
and $\left\langle D,\lambda\right\rangle^m u$ defined by
\[\left\langle D,\lambda\right\rangle^m u=\mathcal F^{-1}(\left\langle (\tau,\xi),\lambda\right\rangle^m \mathcal Fu).\]
For $m\in\R$ we define also the class of symbols
\[S^m_\lambda=\{c_\lambda\in\mathcal C^\infty(\R^{1+n}\times\R^{1+n}):\ |\pd_t^j\pd_x^\alpha\pd_\tau^k\pd_\xi^\beta c_\lambda(t,x,\tau,\xi)|\leq C_{j,\alpha,k,\beta}\left\langle (\tau,\xi),\lambda\right\rangle^{m-k-|\beta|},\ j,k\in\mathbb N,\ \alpha,\beta\in\mathbb N^n\}.\]
Following \cite[Theorem 18.1.6]{Ho3}, for any $m\in\R$ and $c_\lambda\in S^m_\lambda$, we define $c_\lambda(t,x,D_t,D_x)$, with $D_t=-i\pd_t$, $D_x=-i\nabla_x$, by
\[c_\lambda(t,x,D_t,D_x)u(t,x)=(2\pi)^{-{n+1\over 2}}\int_{\R^{1+n}}c_\lambda(t,x,\tau,\xi)\hat{u}(\tau,\xi)e^{it\tau+ix\cdot \xi}d\tau d\xi.\]
For all $m\in\R$, we set also $OpS^m_\lambda:=\{c_\lambda(t,x,D_t,D_x):\ c_\lambda\in S^m_\lambda\}$.
We fix
$$P_{a,\omega,\pm\lambda}:=e^{\mp \lambda(t+x\cdot\omega)}(L_{a,q}-q)e^{\pm \lambda(t+x\cdot\omega)}$$
and we consider the following Carleman estimate.

\begin{lem}\label{l1} Let $a\in W^{1,p}(Q)$. Then, there exists $\lambda_2'>\lambda_1$ such that  
\bel{car2}\norm{v}_{L^2(\R^{1+n})}\leq C\norm{P_{a,\omega,\lambda}v}_{H^{-1}_\lambda(\R^{1+n})},\quad v\in\mathcal C^\infty_0(Q),\ \ \lambda>\lambda_2',\ee
with $C$ independent of $v$ and $\lambda$.
\end{lem}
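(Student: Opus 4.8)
The plan is to reduce the Carleman estimate with the first-order perturbation to the corresponding estimate for the free conjugated operator and then absorb the damping term by exploiting the gain of one derivative. First I would set $P_{0,\omega,\lambda} := e^{-\lambda(t+x\cdot\omega)}\Box\, e^{\lambda(t+x\cdot\omega)}$, so that $P_{a,\omega,\lambda} = P_{0,\omega,\lambda} + a\,e^{-\lambda(t+x\cdot\omega)}\pd_t\,e^{\lambda(t+x\cdot\omega)} = P_{0,\omega,\lambda} + a(\pd_t+\lambda)$. A direct symbol computation gives the full symbol of $P_{0,\omega,\lambda}$ as
\[
p_\lambda(\tau,\xi) = -(i\tau)^2 + |i\xi + \lambda\omega|^2 = \tau^2 - |\xi|^2 + 2i\lambda\,\omega\cdot\xi + \lambda^2 ,
\]
wait — rather $-\tau^2+|\xi|^2+2i\lambda\omega\cdot\xi-\lambda^2$; in any case the key point is $|p_\lambda(\tau,\xi)| \geq c\,\langle(\tau,\xi),\lambda\rangle^{2}\cdot(\text{something})$ fails on a characteristic set, so one does not get an $L^2\to H^1_\lambda$ type estimate for the free operator by symbol ellipticity alone. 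Instead I would follow the approach of \cite{BJY}: the estimate
\[
\norm{v}_{L^2(\R^{1+n})} \leq C\,\norm{P_{0,\omega,\lambda}v}_{H^{-1}_\lambda(\R^{1+n})}, \qquad v\in\mathcal C^\infty_0(Q),
\]
is precisely the negative-order Carleman estimate for the d'Alembertian with linear weight, which is available from the analysis underlying Theorem \ref{c1} (or can be quoted directly from \cite{BJY}). This is the backbone of the argument.

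Next I would handle the perturbation. Writing $P_{a,\omega,\lambda}v = P_{0,\omega,\lambda}v + a(\pd_t+\lambda)v$, the triangle inequality in $H^{-1}_\lambda$ gives
\[
\norm{P_{0,\omega,\lambda}v}_{H^{-1}_\lambda} \leq \norm{P_{a,\omega,\lambda}v}_{H^{-1}_\lambda} + \norm{a(\pd_t+\lambda)v}_{H^{-1}_\lambda}.
\]
The term $\norm{a(\pd_t+\lambda)v}_{H^{-1}_\lambda}$ must be bounded by $C\lambda^{-\epsilon}\norm{v}_{L^2}$ for some $\epsilon>0$, or at least by a small multiple of $\norm{v}_{L^2}$ once $\lambda$ is large, so that it can be absorbed into the left-hand side of the free estimate. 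Here is where the regularity hypothesis $a\in W^{1,p}(Q)$, $p>n+1$, enters: since $\langle D,\lambda\rangle^{-1}$ maps $L^2$ to $H^1_\lambda$ with norm $1$ and, crucially, multiplication by $a$ is bounded on $H^{-1}_\lambda$ with a norm controlled by $\norm{a}_{W^{1,p}}$ (by Sobolev embedding $W^{1,p}(Q)\hookrightarrow L^\infty(Q)$ together with a commutator/mollification estimate for $[\,a,\langle D,\lambda\rangle^{-1}\,]$), one gets
\[
\norm{a(\pd_t+\lambda)v}_{H^{-1}_\lambda} \leq C\norm{a}_{W^{1,p}(Q)}\,\norm{(\pd_t+\lambda)v}_{H^{-1}_\lambda} \leq C\norm{a}_{W^{1,p}(Q)}\,\norm{v}_{L^2(\R^{1+n})},
\]
using that $\norm{(\pd_t+\lambda)v}_{H^{-1}_\lambda}\leq \norm{\pd_tv}_{H^{-1}_\lambda} + \lambda\norm{v}_{H^{-1}_\lambda}\leq C\norm{v}_{L^2}$ since both $\pd_t\langle D,\lambda\rangle^{-1}$ and $\lambda\langle D,\lambda\rangle^{-1}$ are bounded on $L^2$ uniformly in $\lambda$. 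This bound is uniform in $\lambda$ but not small; to close the argument I would refine it, splitting $a = a_\lambda + (a-a_\lambda)$ into a mollified part and a remainder as in Section 4: for the smooth part one gains a power of $\lambda$ from the extra derivative landing favorably, while $\norm{a-a_\lambda}_{L^\infty}\to 0$, so the whole perturbation term is $o(1)\norm{v}_{L^2}$ as $\lambda\to\infty$. Absorbing it and using the free estimate then yields \eqref{car2} for $\lambda$ larger than some $\lambda_2'>\lambda_1$.

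The main obstacle I anticipate is the boundedness of multiplication by $a$ on the negative-order space $H^{-1}_\lambda(\R^{1+n})$ with constant controlled uniformly in $\lambda$, and in a way that allows absorption. Multiplication by an $L^\infty$ function is not bounded on $H^{-1}$ in general, so one genuinely needs the extra Sobolev regularity of $a$ (this is exactly why the hypothesis is $W^{1,p}$, $p>n+1$, rather than merely $L^\infty$), together with a careful $\lambda$-dependent paraproduct or commutator estimate — the mollification device from Section 4 is the natural tool, trading regularity of $a$ against powers of $\lambda$. Once that functional-analytic point is settled, the rest is the standard chain: quote the free negative-order Carleman estimate, perturb, absorb.
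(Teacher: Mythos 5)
Your reduction to a free estimate plus a perturbation, and your use of the boundedness of multiplication by $a\in W^{1,p}$ on $H^{-1}_\lambda(\R^{1+n})$ (which is exactly the paper's Lemma~\ref{l7}), are both on target. The genuine gap is in the absorption step, and you have in fact put your finger on it yourself: the bound $\norm{a(\pd_t+\lambda)v}_{H^{-1}_\lambda}\leq C\norm{a}_{W^{1,p}}\norm{v}_{L^2}$ is uniform in $\lambda$ but not small, and your proposed fix does not make it small. Splitting $a=a_\lambda+(a-a_\lambda)$ does not help: the operator norm of multiplication by $a_\lambda$ on $H^{-1}_\lambda$ is, by the duality computation you would use to prove it, of order $\norm{a_\lambda}_{L^\infty}+\lambda^{-1}\norm{\nabla a_\lambda}_{L^\infty}\approx\norm{a}_{L^\infty}$ for large $\lambda$ --- there is no ``power of $\lambda$ gained from the extra derivative landing favorably,'' because the derivative in $a\pd_t v$ falls on $v$, not on $a$, and $(\pd_t+\lambda)$ maps $L^2$ into $H^{-1}_\lambda$ with norm comparable to $1$ (not $o(1)$) on the relevant frequencies. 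So the perturbation term stays of size $\norm{a}_{L^\infty}\norm{v}_{L^2}$ and cannot be absorbed into the left-hand side of a free estimate whose constant you do not control. (The mollification of Section~4 serves a different purpose in the paper: it builds the amplitudes $b_{j,\lambda}$ of the GO solutions, not the Carleman estimate.)

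The paper's way out is structurally different and is the one missing ingredient in your plan: a second large parameter. One replaces the linear weight by the convexified weight $\phi_{\lambda,s}(t,x)=\lambda(t+\omega\cdot x)-st^2/2$ of \eqref{phi}. The convexity term $-st^2/2$ produces, in the positivity estimate \eqref{7}, interior terms $s\!\int(|\pd_tv|^2+|\nabla v|^2)$ and $s\lambda^2\!\int|v|^2$ whose coefficients grow with $s$; after conjugating by $\left\langle D,\lambda\right\rangle$ (handling the commutator remainder, which costs only $Cs^2\norm{w}_{L^2}$ and is harmless for $\lambda/s^2$ large) one gets a lower bound $Cs^{1/2}(\norm{\pd_tw}_{L^2}+\norm{\nabla w}_{L^2}+\lambda\norm{w}_{L^2})$ for the principal part, while the damping contribution is bounded by $C\norm{a}_{W^{1,p}}(\norm{\pd_tw}_{L^2}+\lambda\norm{w}_{L^2})$ with $C$ independent of $s$. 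Choosing $s>C^2\norm{a}_{W^{1,p}}^2+1$ absorbs the damping term; since $e^{st^2/2}$ is bounded above and below on $Q$ for fixed $s$, the estimate for the perturbed weight implies \eqref{car2} for the linear weight. Without this second parameter (or some equivalent source of a large constant in front of the good interior terms), the absorption in your plan cannot be closed. Note also that the ``free'' negative-order estimate for the pure linear weight is itself delicate: the linear weight is degenerate ($\pd_t^2\phi=0$ when $s=0$), and the interior positivity you need already comes from the $-st^2/2$ perturbation, so the convexification is not optional even at the $a=0$ stage of your argument.
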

\begin{proof} 
For $\phi_{\lambda,s}$ given by \eqref{phi}, we  consider
\[P_{a,  \lambda,s}=e^{-\phi_{\lambda,s}}(L_{a,q}-q)e^{\phi_{\lambda,s}}\]
and in a similar way to Theorem \ref{c1} we decompose $P_{a,\lambda,s}$ into three terms
\[P_{a, \lambda,s}=P_{1}+P_{2}+P_{a,3},\]
with
\[P_{1}=\pd_t^2-\Delta_x +s^2t^2-2\lambda ts-s,\quad P_{2}=2(\lambda-st)\pd_t-2\omega\cdot\nabla_x,\quad P_{a,3}=a\pd_t+a(\lambda-ts).\]
Fixing $\tilde{Q}=(-1,T+1)\times \tilde{\Omega}$ with $\tilde{\Omega}$ a bounded open set of $\R^n$ such that $\overline{\Omega}\subset\tilde{\Omega}$ , we extend the function  $a$  to $\R^{1+n}$ with  $ a\in W^{1,p}(\R^{1+n})$ and $a$ supported on $\tilde{Q}$. We first prove the Carleman estimate
\bel{car}\norm{v}_{L^2(\R^{1+n})}\leq C\norm{P_{a,\lambda,s}v}_{H^{-1}_\lambda(\R^{1+n})},\quad v\in\mathcal C^\infty_0(Q).\ee
For this purpose, we fix $w\in \mathcal C^\infty_0(\tilde{Q})$ and we  consider the quantity
\[\left\langle D,\lambda\right\rangle^{-1}(P_{1}+P_{2})\left\langle D,\lambda\right\rangle w.\]
From now on and in all the remaining parts of this proof $C>0$ denotes a generic constant independent of $s$, $w$ and $\lambda$.
Applying the properties of composition of pseudoddifferential operators (e.g. \cite[Theorem 18.1.8]{Ho3}), we find
\bel{t2c}\left\langle D,\lambda\right\rangle^{-1}(P_{1}+P_{2})\left\langle D,\lambda\right\rangle=P_{1}+P_{2}+R_\lambda(t,D_t,D_x),\ee
where $R_\lambda$ is defined by
\[R_\lambda(t,\tau,\xi)=\pd_\tau\left\langle (\tau,\xi),\lambda\right\rangle^{-1}D_t(p_{1}(t,\tau,\xi)+p_{2}(t,\tau,\xi))\left\langle (\tau,\xi),\lambda\right\rangle+\underset{\left\langle (\tau,\xi),\lambda\right\rangle\to+\infty}{\mathcal O}(1),\]
with
$$p_{1}(t,\tau,\xi)=-\tau^2+|\xi|^2-2s\lambda t+s^2t^2-s ,\quad p_{2}(t,\tau,\xi)=2i((\lambda-st)\tau-\omega\cdot\xi).$$
Therefore, we have
$$R_\lambda(t,\tau,\xi)={i\tau(-2is\tau-2\lambda s+2s^2t)\over \tau^2+|\xi|^2+\lambda^2}+\underset{\left\langle (\tau,\xi),\lambda\right\rangle\to+\infty}{\mathcal O}(1)$$
and it follows
\bel{t2d} \norm{R_\lambda(t,D_t,D_x)w}_{L^2(\R^n)}\leq Cs^2\norm{w}_{L^2(\R^{1+n})}.\ee
On the other hand,  applying \eqref{7} to $w$ with $Q$ replaced by  $\tilde{Q}$,  we get
\[\norm{P_{1}w+P_{2}w}_{L^2(\R^{1+n})}\geq C\left[s^{1/2}\left(\norm{\nabla w}_{L^2(\R^{1+n})}+\norm{\pd_t w}_{L^2(\R^{1+n})}\right)+s^{1/2}\lambda\norm{ w}_{L^2(\R^{1+n})}\right].\]
Combining this estimate with \eqref{t2c}-\eqref{t2d}, for ${\lambda\over s^2}$ sufficiently large, we obtain
 \bel{t2e}\begin{array}{l} \norm{(P_{1}+P_{2})\left\langle D,\lambda\right\rangle w}_{H^{-1}_\lambda(\R^{1+n})}\\
=\norm{\left\langle D,\lambda\right\rangle^{-1}(P_{1}+P_{2})\left\langle D,\lambda\right\rangle w}_{L^2(\R^n)}\\ \geq  C\left[s^{1/2}\left(\norm{\nabla w}_{L^2(\R^{1+n})}+\norm{\pd_t w}_{L^2(\R^{1+n})}\right)+s^{1/2}\lambda\norm{ w}_{L^2(\R^{1+n})}-s^2\norm{w}_{L^2(\R^{1+n})}\right]\\
\geq Cs^{1/2}\left(\norm{\nabla w}_{L^2(\R^{1+n})}+\norm{\pd_t w}_{L^2(\R^{1+n})}+\lambda\norm{ w}_{L^2(\R^{1+n})}\right).\end{array}\ee
Moreover, we have
\[\norm{P_{a,3}\left\langle D,\lambda\right\rangle w}_{H^{-1}_\lambda(\R^{1+n})}\leq \norm{a\pd_t\left\langle D,\lambda\right\rangle w}_{H^{-1}_\lambda(\R^{1+n})}+\norm{a(\lambda-st)\left\langle D,\lambda\right\rangle w}_{H^{-1}_\lambda(\R^{1+n})}.\]
Now let us consider the following result.

\begin{lem}\label{l7} Let $a\in W^{1,p}(\R^n)$ be supported on $\tilde{Q}$. Then, for all $f\in L^2(\R^{1+n})$ we have
\bel{l7a}\norm{af}_{H^{-1}_\lambda(\R^{1+n})}\leq C\norm{a}_{W^{1,p}(\R^{1+n})}\norm{f}_{H^{-1}_\lambda(\R^{1+n})},\ee
with $C$ a constant depending only on $\tilde{Q}$.\end{lem}
The proof of this lemma will be postponed to the end of the present demonstration. Applying estimate \eqref{l7a} with $\lambda>s(T+1)$,  we obtain
\[\begin{array}{l}\norm{P_{a,3}\left\langle D,\lambda\right\rangle w}_{H^{-1}_\lambda(\R^{1+n})}\\
\leq C\left(\norm{a}_{W^{1,p}(\R^{1+n})}\norm{\pd_t\left\langle D,\lambda\right\rangle w}_{H^{-1}_\lambda(\R^{1+n})}+\norm{(\lambda-st)a}_{W^{1,p}(\R^{1+n})} \norm{\left\langle D,\lambda\right\rangle w}_{H^{-1}_\lambda(\R^{1+n})}\right)\\
\leq C\left(\norm{a}_{W^{1,p}(\R^{1+n})}\norm{\pd_t w}_{L^2(\R^{1+n})}+\norm{a}_{W^{1,p}(\R^{1+n})}\lambda \norm{ w}_{L^2(\R^{1+n})}\right),\end{array}\]
with $C$ depending only on $\Omega$ and $T$.
Thus, choosing $s>C\norm{a}_{W^{1,p}(\R^{1+n})}+1$, \eqref{t2e} implies
\bel{t2f}\norm{P_{a,\lambda,s}\left\langle D,\lambda\right\rangle w}_{H^{-1}_\lambda(\R^{1+n})}\geq C\norm{w}_{H^1_\lambda(\R^{1+n})}.\ee
Now we fix $\psi_0\in\mathcal C^\infty_0(\tilde{Q})$ satisfying $\psi_0=1$ on $\overline{Q_1}$, with $Q_1=(-\delta,T+\delta)\times \Omega_1$, $\delta\in(0,1)$ and $\Omega_1$ an open neighborhood of $\overline{\Omega}$ such that $\overline{\Omega_1}\subset\tilde{\Omega}$. Then, we fix $w=\psi_0 \left\langle D,\lambda\right\rangle^{-1} v$ and for $\psi_1\in\mathcal C^\infty_0(Q_1)$ satisfying $\psi_1=1$ on $Q$, we get $(1-\psi_0 )\left\langle D,\lambda\right\rangle^{-1} v=(1-\psi_0 )\left\langle D,\lambda\right\rangle^{-1}\psi_1 v$. According to \cite[Theorem 18.1.8]{Ho3}, we have $(1-\psi_0) \left\langle D,\lambda\right\rangle\psi_1\in OpS^{-\infty}_\lambda$ and it follows
\[ \begin{aligned}\norm{v}_{L^2(\R^{1+n})}&=\norm{\left\langle D,\lambda\right\rangle^{-1} v}_{H^1_\lambda(\R^{1+n})}\\
\ &\leq \norm{w}_{H^1_\lambda(\R^{1+n})}+\norm{(1-\psi_0)\left\langle D,\lambda\right\rangle^{-1}\psi_1 v}_{L^2(\R^{1+n})}\\
\ &\leq \norm{w}_{H^1_\lambda(\R^{1+n})}+{C\norm{v}_{L^2(\R^{1+n})}\over\lambda^2} .\end{aligned}\]
In the same way, we find
$$\begin{aligned}\norm{P_{a,\lambda,s} v}_{H^{-1}_\lambda(\R^{1+n})}&\geq \norm{P_{a,\lambda,s}\left\langle D,\lambda\right\rangle w}_{H^{-1}_\lambda(\R^{1+n})}-\norm{P_{a,\lambda,s}\left\langle D,\lambda\right\rangle (1-\psi_0 )\left\langle D,\lambda\right\rangle^{-1}\psi_1 v}_{H^{-1}_\lambda(\R^{1+n})}\\
\ &\geq \norm{P_{a,\lambda,s}\left\langle D,\lambda\right\rangle w}_{H^{-1}_\lambda(\R^{1+n})}-{C\norm{v}_{L^2(\R^{1+n})}\over\lambda^2}.\end{aligned}$$

Combining these estimates with \eqref{t2f}, we deduce that \eqref{car} holds true for a sufficiently large value of  $\lambda$. Applying \eqref{car} for a fixed value of $s$, we deduce that there exists $\lambda_2'>0$ such that \eqref{car2} is fulfilled.\end{proof}
Now that the proof of Lemma \ref{l1} is completed let us consider Lemma \ref{l7}.

\textbf{Proof of Lemma \ref{l7}.} We fix $h\in H^{1}(\R^{1+n})$. Since $a\in W^{1,p}(\R^{1+n})$ is compactly supported, we have both $a\in L^\infty(\R^{1+n})$, $a\in W^{1,n+1}(\R^{1+n})$ and combining the Sobolev embedding theorem with the H\"older inequality one can check that $ah\in H^{1}(\R^{1+n})$ with
$$\norm{ah}_{H^1(\R^{1+n})}\leq C\norm{a}_{W^{1,p}(\R^{1+n})}\norm{h}_{H^1(\R^{1+n})},$$
with $C$ depending only on $\tilde{Q}$. Therefore, we find
$$\begin{aligned}\norm{ah}_{H^1_\lambda (\R^{1+n})}^2&= \lambda^2\norm{ah}_{L^2(\R^{1+n})}^2+\norm{\nabla_{t,x}(ah)}_{L^2(\R^{1+n})}^2\\
\ &\leq \lambda^2\norm{a}_{L^\infty(\R^{1+n})}^2\norm{h}_{L^2(\R^{1+n})}^2+C^2\norm{a}_{W^{1,p}(\R^{1+n})}^2\norm{h}_{H^1(\R^{1+n})}^2\\
\ &\leq (\norm{a}_{L^\infty(\R^{1+n})}^2+C^2\norm{a}_{W^{1,p}(\R^{1+n})}^2)[\lambda^2\norm{h}_{L^2(\R^{1+n})}^2+\norm{h}_{H^1(\R^{1+n})}^2]\end{aligned}$$
and from this estimate we obtain
\bel{l7b}\norm{ah}_{H^1_\lambda (\R^{1+n})}\leq C\norm{a}_{W^{1,p}(\R^{1+n})}\norm{h}_{H^1_\lambda (\R^{1+n})}.\ee
On the other hand, we have
$$\abs{\left\langle af,h\right\rangle_{H^{-1}_\lambda(\R^{1+n}),H^{1}_\lambda(\R^{1+n})}}=\abs{\left\langle f,ah\right\rangle_{L^2(\R^{1+n})}}=\abs{\left\langle f,ah\right\rangle_{H^{-1}_\lambda(\R^{1+n}),H^{1}_\lambda(\R^{1+n})}}\leq \norm{f}_{H^{-1}_\lambda(\R^{1+n})}\norm{ah}_{H^1_\lambda (\R^{1+n})}$$
and \eqref{l7b} implies
$$\abs{\left\langle af,h\right\rangle_{H^{-1}_\lambda(\R^{1+n}),H^{1}_\lambda(\R^{1+n})}}\leq C\norm{a}_{W^{1,p}(\R^{1+n})}\norm{f}_{H^{-1}_\lambda(\R^{1+n})}\norm{h}_{H^1_\lambda (\R^{1+n})}.$$
From this estimate we deduce \eqref{l7a}.\qed\\

Using the Carleman estimate \eqref{car2}, we can consider solutions $v\in H^1(Q)$ of the equation $P_{-a,\omega,-\lambda}v=F$ with $F\in L^2(Q)$ in the following way.
\begin{lem}\label{l1bis} Let $\omega\in \mathbb S^{n-1}$, $\lambda>\lambda_2'$,  $a\in W^{1,p}(Q)$. Then, there exists a bounded operator $E_{a,\lambda}\in \mathcal B(L^2(Q);H^1(Q))$ such that:
\bel{l1bisa} P_{-a,\omega,-\lambda}E_{a,\lambda} F=F,\quad F\in L^2(Q),\ee
\bel{l1bisb}\lambda\norm{E_{a,\lambda} F}_{L^2(Q)}+\norm{E_{a,\lambda} F}_{H^1(Q)}\leq C\norm{F}_{L^2(Q)},\ee
where $C>0$ is independent of $\lambda$ and $F$.\end{lem}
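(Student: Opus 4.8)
The plan is to deduce Lemma~\ref{l1bis} from the Carleman estimate \eqref{car2} by the usual duality argument, once that estimate is transferred to the formal adjoint of $P_{-a,\omega,-\lambda}$. Writing $\ell(t,x)=t+x\cdot\omega$ and extending $a$ to $\R^{1+n}$ (supported in a neighbourhood of $\overline{Q}$) as in the proof of Lemma~\ref{l1}, one has $P_{-a,\omega,-\lambda}=e^{\lambda\ell}(\Box-a\pd_t)e^{-\lambda\ell}$, and since $a$ is real valued an integration by parts gives
\[
P_{-a,\omega,-\lambda}^{*}=e^{-\lambda\ell}\big(\Box+a\pd_t+\pd_ta\big)e^{\lambda\ell}=P_{a,\omega,\lambda}+\pd_ta ,
\]
the last term acting by multiplication. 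The estimate \eqref{car2} applies to $P_{a,\omega,\lambda}$, so the only extra work is to absorb the zeroth order perturbation $\pd_ta$.

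This is where the hypothesis $p>n+1$ enters: $\pd_ta\in L^p(\R^{1+n})$, and for $f\in L^2(\R^{1+n})$ and $h\in H^1_\lambda(\R^{1+n})$, H\"older's inequality (with $\tfrac1p+\tfrac{p-2}{2p}=\tfrac12$) together with the Sobolev embedding $H^{(n+1)/p}(\R^{1+n})\hookrightarrow L^{2p/(p-2)}(\R^{1+n})$, valid because $0<\tfrac{n+1}{p}<1$, give
\[
\abs{\langle (\pd_ta)f,h\rangle}=\abs{\langle f,(\pd_ta)h\rangle}\leq \norm{f}_{L^2}\norm{\pd_ta}_{L^p}\norm{h}_{L^{2p/(p-2)}}\leq C\norm{f}_{L^2}\norm{h}_{H^{(n+1)/p}_\lambda} .
\]
Since $\langle(\tau,\xi),\lambda\rangle\geq\lambda$ and $\tfrac{n+1}{p}-1<0$ we have $\norm{h}_{H^{(n+1)/p}_\lambda}\leq\lambda^{\frac{n+1}{p}-1}\norm{h}_{H^1_\lambda}$, hence $\norm{(\pd_ta)f}_{H^{-1}_\lambda(\R^{1+n})}\leq C\lambda^{\frac{n+1}{p}-1}\norm{f}_{L^2(\R^{1+n})}$. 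Inserting this into \eqref{car2} applied to $w\in\mathcal C^\infty_0(Q)$,
\[
\norm{w}_{L^2}\leq C\norm{P_{a,\omega,\lambda}w}_{H^{-1}_\lambda}\leq C\norm{P_{-a,\omega,-\lambda}^{*}w}_{H^{-1}_\lambda}+C\lambda^{\frac{n+1}{p}-1}\norm{w}_{L^2},
\]
so after possibly enlarging $\lambda_2'$ to absorb the last term one obtains $\norm{w}_{L^2(\R^{1+n})}\leq C\norm{P_{-a,\omega,-\lambda}^{*}w}_{H^{-1}_\lambda(\R^{1+n})}$ for every $w\in\mathcal C^\infty_0(Q)$ and $\lambda>\lambda_2'$.

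With this adjoint estimate I would construct $E_{a,\lambda}$ by duality. Fix $F\in L^2(Q)$. On the subspace $W=\{P_{-a,\omega,-\lambda}^{*}w:\ w\in\mathcal C^\infty_0(Q)\}$ of $H^{-1}_\lambda(\R^{1+n})$ the map $P_{-a,\omega,-\lambda}^{*}w\mapsto\langle w,F\rangle_{L^2(Q)}$ is well defined (by the injectivity just established) and bounded by $C\norm{F}_{L^2(Q)}$ times the $H^{-1}_\lambda$-norm; extend it by continuity to $\overline{W}$ and by $0$ on $\overline{W}^{\perp}$ to get a functional on $H^{-1}_\lambda(\R^{1+n})$, linear in $F$, of norm $\leq C\norm{F}_{L^2(Q)}$. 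Since $H^1_\lambda(\R^{1+n})$ is the dual of $H^{-1}_\lambda(\R^{1+n})$, the Riesz theorem produces $v_F\in H^1_\lambda(\R^{1+n})$, linear in $F$, with $\norm{v_F}_{H^1_\lambda(\R^{1+n})}\leq C\norm{F}_{L^2(Q)}$ and $\langle P_{-a,\omega,-\lambda}^{*}w,v_F\rangle=\langle w,F\rangle_{L^2(Q)}$ for all $w\in\mathcal C^\infty_0(Q)$; as $P_{-a,\omega,-\lambda}^{*}w\in L^2$ is supported in $Q$, this identity reads $\langle w,P_{-a,\omega,-\lambda}v_F-F\rangle_{\mathcal D(Q),\mathcal D'(Q)}=0$, i.e. $P_{-a,\omega,-\lambda}v_F=F$ in $\mathcal D'(Q)$. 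Then $E_{a,\lambda}F:=(v_F)_{|Q}$ is a bounded linear operator from $L^2(Q)$ to $H^1(Q)$, \eqref{l1bisa} holds, and \eqref{l1bisb} follows from $\lambda\norm{E_{a,\lambda}F}_{L^2(Q)}+\norm{E_{a,\lambda}F}_{H^1(Q)}\leq C\norm{v_F}_{H^1_\lambda(\R^{1+n})}\leq C\norm{F}_{L^2(Q)}$, with $C$ independent of $\lambda$ and $F$.

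The main obstacle is the zeroth order term $\pd_ta$ in the adjoint: being only $L^p$ and not bounded, it cannot be folded into the fixed constant of \eqref{car2}, so one really needs the decay $\lambda^{(n+1)/p-1}\to0$, and this is precisely what forces $p>n+1$. A minor technical point is to arrange the duality step so that $F\mapsto v_F$ is linear; this is why the functional is extended canonically (by density to $\overline{W}$, by zero on $\overline{W}^{\perp}$) instead of via an arbitrary Hahn--Banach extension.
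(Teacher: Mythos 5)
Your construction is correct and follows the same overall strategy as the paper's proof: dualize the Carleman estimate \eqref{car2} to produce a solution operator, the $H^{-1}_\lambda$ norm on the right-hand side of \eqref{car2} translating into the bound $\lambda\norm{E_{a,\lambda}F}_{L^2(Q)}+\norm{E_{a,\lambda}F}_{H^1(Q)}\leq C\norm{v_F}_{H^1_\lambda(\R^{1+n})}\leq C\norm{F}_{L^2(Q)}$. The one place where you genuinely diverge is the treatment of the adjoint, and there your version is the more careful one. The paper defines the functional directly on $\{P_{a,\omega,\lambda}z:\ z\in\mathcal C^\infty_0(Q)\}$ and concludes $P_{-a,\omega,-\lambda}v=F$ as if $P_{a,\omega,\lambda}$ were the transpose of $P_{-a,\omega,-\lambda}$; as you compute, the transpose is $P_{a,\omega,\lambda}+\pd_ta$, so the function produced that way a priori solves $P_{-a,\omega,-\lambda}v-(\pd_ta)v=F$. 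Your extra step --- estimating $\norm{(\pd_ta)f}_{H^{-1}_\lambda}\leq C\lambda^{(n+1)/p-1}\norm{f}_{L^2}$ via H\"older, the Sobolev embedding $H^{(n+1)/p}(\R^{1+n})\hookrightarrow L^{2p/(p-2)}(\R^{1+n})$ and the decay $\lambda^{(n+1)/p-1}\to0$ (which is exactly where $p>n+1$ enters) and then absorbing this perturbation into \eqref{car2} --- closes that gap and yields an exact solution of $P_{-a,\omega,-\lambda}v=F$ in $\mathcal D'(Q)$. Your replacement of the Hahn--Banach extension by the canonical one (continuity on $\overline{W}$, zero on $\overline{W}^{\perp}$) to secure linearity of $F\mapsto E_{a,\lambda}F$ is also a legitimate tightening: an arbitrary Hahn--Banach extension gives the norm bound but not, by itself, that $E_{a,\lambda}$ is a linear operator as claimed in the statement.
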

\begin{proof}
 We will use estimate \eqref{car2} to construct a solution $v\in H^1(\R^{1+n})$ of the equation $P_{-a,\omega,-\lambda}v=F$ in $Q$.
Applying the Carleman estimate \eqref{car2}, we define the linear form $\mathcal K$ on $\{P_{a,\omega,\lambda}z:\ z\in\mathcal C^\infty_0(Q)\}$, considered as a subspace of $H^{-1}_\lambda(\R^{1+n})$ by
\[\mathcal K(P_{a,\omega,\lambda}z)=\int_QzFdxdt,\quad z\in\mathcal C^\infty_0(Q).\]
Then, \eqref{car2} implies 
\[|\mathcal K(P_{a,\omega,\lambda}z)|\leq C\norm{F}_{L^2(Q)}\norm{P_{a,\omega,\lambda}z}_{H^{-1}_\lambda(\R^{1+n})},\quad z\in\mathcal C^\infty_0(Q).\]
Thus, by the Hahn Banach theorem we can extend $\mathcal K$ to a continuous linear form on $H^{-1}_\lambda(\R^{1+n})$ still denoted by $\mathcal K$ and satisfying $\norm{\mathcal K}\leq C\norm{F}_{L^2(Q)}$. Therefore, there exists $v\in H^{1}_\lambda(\R^{1+n})$ such that 
\[\left\langle h,v\right\rangle_{H^{-1}_\lambda(\R^{1+n}),H^{1}_\lambda(\R^{1+n})}=\mathcal K(h),\quad h\in H^{-1}_\lambda(\R^{1+n}).\]
Choosing $h=P_{a,\omega,\lambda}z$ with $z\in \mathcal C^\infty_0(Q)$ proves that $v$ satisfies $P_{-a,\omega,-\lambda}v=F$ in $Q$.
Moreover, we have $\norm{v}_{H^{1}_\lambda(\R^{1+n})}\leq \norm{\mathcal K}\leq C\norm{F}_{L^2(Q)}$. Therefore, fixing $E_{a,\lambda} F=v_{|Q}$ we deduce easily \eqref{l1bisa}-\eqref{l1bisb}.\end{proof}

Armed with Lemma \ref{l1}-\ref{l1bis}, we are now in position to complete the proof of Proposition \ref{p1}.
\ \\
 \textbf{Proof of proposition \ref{p1}.} Note first that the condition $L_{-a_1,q_1}u_1=0$ is fulfilled if and only if $w$
solves
\bel{CGO12} P_{-a_1,\omega,-\lambda}w=-q_1w-P_{-a_1,\omega,-\lambda}b_{1,\lambda}-q_1b_{1,\lambda}=-q_1w+\lambda (2\pd_tb_{1,\lambda}-2\omega\cdot\nabla_xb_{1,\lambda}-a_1b_{1,\lambda})-L_{-a_1,q_1}b_{1,\lambda}.\ee
Therefore, fixing 
$$G=-\lambda (2\pd_tb_{1,\lambda}-2\omega\cdot\nabla_xb_{1,\lambda}-a_1b_{1,\lambda})+L_{-a_1,q_1}b_{1,\lambda}$$
we can consider $w$ as a solution of $w=-E_{a_1,\lambda} (q_1w+G)$. We will solve this equation by considering the fixed point of the map
\[\begin{array}{rccl} \mathcal G: & L^2(Q) & \to & L^2(Q), \\
 \ \\ & F & \mapsto &-E_{a_1,\lambda}\left[q_1F+G\right]. \end{array}\]
For this purpose, we recall that \eqref{condf}-\eqref{condg} imply that $\norm{G}_{L^2(Q)}\leq C\lambda^{{3-\alpha\over3}}$ and \eqref{l1bisb} implies
\bel{eqeq}\norm{E_{a_1,\lambda}G}_{L^2(Q)}\leq C\lambda^{-1}\norm{G}_{L^2(Q)}\leq C\lambda^{-{\alpha\over3}},\ee
with $C$ independent of $\lambda$. Thus, fixing $M_1>0$, we can find $\lambda_2>\lambda_2'$ such that for $\lambda\geq \lambda_2$ the map $\mathcal G$ admits a unique fixed point $w$ on $\{u\in L^2(Q): \norm{u}_{L^2(Q)}\leq M_1\}$. In addition, we have $w\in H^1(Q)$ and from condition \eqref{l1bisb}, \eqref{eqeq} we obtain
\[\norm{w}_{H^1(Q)}+\lambda\norm{w}_{L^2(Q)}\leq  C\norm{G}_{L^2(Q)}\leq C\lambda^{{3-\alpha\over3}}.\]
From this estimate we deduce \eqref{CGO11}. This completes the proof of Proposition \ref{p1}.\qed

\subsection{Exponentially growing solutions}
In this subsection we will consider the construction of the exponentially growing solutions given by Proposition \ref{p3}.  For this purpose we consider the operator
\[P_{-a,-\lambda,s}=e^{-\phi_{-\lambda,s}}(\pd_t^2-a\pd_t-\Delta_x)e^{\phi_{-\lambda,s}},\]
with $\phi_{-\lambda,s}$ given by \eqref{phi}. By extending $a$ to $a\in W^{1,p}(\R^{1+n})$ with $a$ supported on $\tilde{Q}$, we assume that $P_{-a,\omega,-\lambda}$ and $P_{-a,-\lambda,s}$ are  differential operators acting on $ \R^{1+n}$.  We decompose $P_{-a,-\lambda,s}$ into three terms
\[P_{-a,-\lambda,s}=P_{-,1}+P_{-,2}+P_{-,3},\]with
\[P_{-,1}=\pd_t^2-\Delta_x-s+s^2t^2+2\lambda st,\quad P_{-,2}=-2(\lambda+st)\pd_t+2\lambda\omega\cdot\nabla_x,\quad P_{-,3}=-a\pd_t+a(\lambda+st).\]
Like in Proposition \ref{p1}, the construction of solutions of the form \eqref{GO2}, is based on a suitable Carleman estimate in negative order sobolev spaces. This estimate is given by the following.

\begin{lem}\label{l8} There exists $\lambda_3^*>0$ such that for $\lambda>\lambda_3^*$,   $s\in(0,\lambda)$ with ${\lambda\over s}$ sufficiently large,  we have
\bel{l8a}\norm{P_{-a,-\lambda,s}v}_{H^{-1}_\lambda(\R^{1+n})}\geq Cs^{1/2}\left(\lambda\norm{v}_{L^2(\R^{1+n})}+\norm{v}_{H^1(\R^{1+n})}\right),\ee
with $\tilde{\Omega}$ a domain containing $\overline{\Omega}$, $\tilde{Q}=(-1,T+1)\times \tilde{\Omega}$ and  $C$ a constant depending only on $T$ and $\tilde{\Omega}$.
\end{lem}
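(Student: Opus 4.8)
The plan is to transcribe the proof of Lemma~\ref{l1} to the growing weight $\phi_{-\lambda,s}$. After extending $a$ to $W^{1,p}(\R^{1+n})$ with support in $\tilde{Q}=(-1,T+1)\times\tilde{\Omega}$, so that $P_{-a,-\lambda,s}$ is a differential operator on $\R^{1+n}$, the core step is the $L^2$ Carleman estimate for the leading part: for every $w\in\mathcal C^\infty_0(\tilde{Q})$,
\bel{e-l8-aux}\norm{P_{-,1}w+P_{-,2}w}_{L^2(\R^{1+n})}\geq Cs^{1/2}\left(\norm{\nabla_x w}_{L^2(\R^{1+n})}+\norm{\pd_t w}_{L^2(\R^{1+n})}+\lambda\norm{w}_{L^2(\R^{1+n})}\right),\ee
valid for $s\in(0,\lambda)$ with $\lambda/s$ large. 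I would obtain it exactly as in the proof of Theorem~\ref{c1}: expand $2\int_{\R^{1+n}}P_{-,1}w\,P_{-,2}w\,dx\,dt$, integrate by parts in $t$ and in $x$, and use that, $w$ being compactly supported, all the lateral and final-time boundary terms that occurred in Theorem~\ref{c1} now vanish. The only changes with respect to the identities \eqref{1}--\eqref{5} are the signs coming from $\pd_t\phi_{-\lambda,s}=-(\lambda+st)$ and $\nabla_x\phi_{-\lambda,s}=-\lambda\omega$; the convexity term $\pd_t^2\phi_{-\lambda,s}=-s$ still contributes the positive $s$-weight on $\norm{\pd_t w}_{L^2}^2$ and $\norm{\nabla_x w}_{L^2}^2$, while the contribution of $\Box\phi_{-\lambda,s}+|\pd_t\phi_{-\lambda,s}|^2-|\nabla_x\phi_{-\lambda,s}|^2=s^2t^2+2\lambda st-s$ gives, once $\lambda/s$ is large, the term $s\lambda^2\norm{w}_{L^2}^2$.

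The passage to $H^{-1}_\lambda$ then follows the proof of Lemma~\ref{l1} verbatim. By the composition calculus (\cite[Theorem 18.1.8]{Ho3}),
\[\left\langle D,\lambda\right\rangle^{-1}(P_{-,1}+P_{-,2})\left\langle D,\lambda\right\rangle=P_{-,1}+P_{-,2}+R_\lambda(t,D_t,D_x),\qquad\norm{R_\lambda(t,D_t,D_x)w}_{L^2(\R^{1+n})}\leq Cs^2\norm{w}_{L^2(\R^{1+n})},\]
so that, when $\lambda/s^2$ is large, \eqref{e-l8-aux} yields $\norm{(P_{-,1}+P_{-,2})\left\langle D,\lambda\right\rangle w}_{H^{-1}_\lambda(\R^{1+n})}\geq Cs^{1/2}\norm{w}_{H^1_\lambda(\R^{1+n})}$. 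The first order term $P_{-,3}=-a\pd_t+a(\lambda+st)$ is handled with Lemma~\ref{l7}: for $\lambda>s(T+1)$,
\[\norm{P_{-,3}\left\langle D,\lambda\right\rangle w}_{H^{-1}_\lambda(\R^{1+n})}\leq C\norm{a}_{W^{1,p}(\R^{1+n})}\left(\norm{\pd_t w}_{L^2(\R^{1+n})}+\lambda\norm{w}_{L^2(\R^{1+n})}\right),\]
which is absorbed into the right-hand side once $s$ is large enough in terms of $\norm{a}_{W^{1,p}(\R^{1+n})}$. Then, choosing cut-offs $\psi_0,\psi_1\in\mathcal C^\infty_0$ as in the proof of Lemma~\ref{l1} and setting $w=\psi_0\left\langle D,\lambda\right\rangle^{-1}v$, the error operators $(1-\psi_0)\left\langle D,\lambda\right\rangle^{\pm1}\psi_1$ belong to $OpS^{-\infty}_\lambda$ and produce only $\mathcal O(\lambda^{-2}\norm{v}_{L^2(\R^{1+n})})$; keeping track of the weighted norm $\norm{\,\cdot\,}_{H^1_\lambda}$ rather than collapsing to $L^2$ at the end, and reassembling the pieces, one arrives at \eqref{l8a}.

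The step I expect to be the main obstacle is \eqref{e-l8-aux}: verifying that the quadratic form generated by $2\int P_{-,1}w\,P_{-,2}w$ stays positive definite, with the stated $s^{1/2}$ and $\lambda$ weights, after the sign of $\lambda(t+\omega\cdot x)$ in the weight has been reversed. This forces one to redo, being careful with the signs, the analogues of \eqref{1}--\eqref{5}, and to keep every dependence on $s$ explicit so that the hypotheses on $\lambda$ and $\lambda/s$ (together with a lower bound on $s$ in terms of $\norm{a}_{W^{1,p}(\R^{1+n})}$) indeed suffice. Since $w$ is compactly supported the boundary terms disappear, so this computation is actually lighter than the one in Theorem~\ref{c1}; everything else is a routine transcription of the proofs of Lemma~\ref{l1} and Lemma~\ref{l7}.
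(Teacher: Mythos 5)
Your proposal is correct and follows essentially the same route as the paper: the paper also first establishes the $L^2$ estimate for $P_{-,1}+P_{-,2}$ by expanding $2\int P_{-,1}v\,P_{-,2}v$ and exploiting the sign structure of the perturbed weight (the key zeroth-order contribution being $\Box\phi_{-\lambda,s}+|\pd_t\phi_{-\lambda,s}|^2-|\nabla_x\phi_{-\lambda,s}|^2=s^2t^2+2\lambda st-s$, yielding the $s\lambda^2\norm{v}_{L^2}^2$ term for $\lambda/s$ large), and then passes to $H^{-1}_\lambda$ by literally repeating the conjugation-by-$\left\langle D,\lambda\right\rangle$, remainder, Lemma~\ref{l7}, and cut-off arguments of Lemma~\ref{l1}. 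Your write-up simply makes explicit the steps the paper compresses into ``repeating the arguments used in Theorem~\ref{c1}'' and ``arguments similar to Lemma~\ref{l1}.''
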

\begin{proof} Without lost of generality we assume that $v$ is real valued. We start by proving the following estimate
\bel{l8b}\norm{P_{-,1}v+P_{-,2}v}_{L^2(\R^{1+n})}\geq Cs^{1/2}\left(\lambda\norm{v}_{L^2(\R^{1+n})}+\norm{v}_{H^1(\R^{1+n})}\right),\ee
Note first that
$$\begin{array}{l}P_{-,1}vP_{-,2}v\\
=-2(\lambda+st)\pd_t^2v\pd_tv+2\pd_t^2v\omega\cdot \nabla_xv+2(\lambda+st)\Delta_xv\pd_tv-2(\Delta_xv)\omega\cdot\nabla_xv\\
+2(-s^3t^3-3\lambda s^2t^2+(s^2-2\lambda^2s)t+s\lambda)\pd_tvv+2(s^2t^2+2\lambda st-s)v\omega\cdot\nabla_xv.\end{array}$$
Therefore, repeating the arguments used in Theorem \ref{c1}, for $\lambda$ sufficiently large we get
\[\begin{aligned}\norm{P_{-,1}v+P_{-,2}v}^2_{L^2(\R^{*}_+\times\R^{n})}&\geq 2\abs{\int_{(0,T+1)\times \tilde{\Omega}}P_{-,1}vP_{-,2}vdxdt}\\
\ &\geq s\lambda^2\int_{(0,T+1)\times \tilde{\Omega}}|v|^2dxdt+s\int_{(0,T+1)\times \tilde{\Omega}}(|\nabla_x v|^2+|\pd_t v|^2)dxdt.\end{aligned}\]
From this estimate we deduce \eqref{l8b}. Combining \eqref{l8b} with arguments similar to Lemma \ref{l1}, we deduce \eqref{l8a}.\end{proof}
In a similar way to Lemma \ref{l1bis}, using the Carleman estimate \eqref{l8a}  we can prove the following.

\begin{lem}\label{l3}Let $\omega\in\mathbb S^{n-1}$  and $a\in W^{1,p}(Q)$. Then, for $\lambda>\lambda_3'$, there  exists a bounded operator $S_{a,\lambda}\in \mathcal B(L^2(Q);H^1(Q))$ such that:
\bel{l3a} P_{a,\omega,\lambda}S_{a,\lambda} F=F,\quad F\in L^2(Q),\ee
\bel{l3b}\lambda\norm{S_{a,\lambda} F}_{L^2(Q)}+\norm{S_{a,\lambda} F}_{H^1(Q)}\leq C\norm{F}_{L^2(Q)},\ee
where $C>0$ is independent of $\lambda$.
\end{lem}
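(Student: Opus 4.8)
The plan is to repeat, almost verbatim, the proof of Lemma~\ref{l1bis}, with the Carleman estimate \eqref{l8a} of Lemma~\ref{l8} taking the place that \eqref{car2} had there. The first step is to convert \eqref{l8a}, stated for the conjugated operator $P_{-a,-\lambda,s}$ on all of $\R^{1+n}$, into a negative-order Carleman estimate for the operator $P_{-a,\omega,-\lambda}$ on functions supported in $Q$. Since $P_{-a,-\lambda,s}$ and $P_{-a,\omega,-\lambda}$ differ only by conjugation with the weight $e^{\pm st^2/2}$, which is bounded together with all its derivatives on the relevant cylinder, and since one can cut off near $Q$ and control the non-local factor $\langle D,\lambda\rangle$ exactly as in the final part of the proof of Lemma~\ref{l1}, fixing $s$ (depending on $T$, $\tilde{\Omega}$ and $\norm{a}_{W^{1,p}(\R^{1+n})}$) yields $\lambda_3'\geq\lambda_3^*$ such that
\[\norm{v}_{L^2(\R^{1+n})}\leq C\norm{P_{-a,\omega,-\lambda}v}_{H^{-1}_\lambda(\R^{1+n})},\qquad v\in\mathcal C^\infty_0(Q),\ \ \lambda>\lambda_3',\]
with $C$ independent of $v$ and $\lambda$.

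Given $F\in L^2(Q)$ and $\lambda>\lambda_3'$, I would then introduce the linear form $\mathcal K(P_{-a,\omega,-\lambda}z)=\int_Q zF\,dxdt$ on the subspace $\{P_{-a,\omega,-\lambda}z:\ z\in\mathcal C^\infty_0(Q)\}$ of $H^{-1}_\lambda(\R^{1+n})$. The estimate above shows that $z\mapsto P_{-a,\omega,-\lambda}z$ is injective, so that $\mathcal K$ is well defined, and that $|\mathcal K(P_{-a,\omega,-\lambda}z)|\leq\norm{z}_{L^2(Q)}\norm{F}_{L^2(Q)}\leq C\norm{F}_{L^2(Q)}\norm{P_{-a,\omega,-\lambda}z}_{H^{-1}_\lambda(\R^{1+n})}$. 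Hence, by the Hahn--Banach theorem, $\mathcal K$ extends to a continuous linear form on $H^{-1}_\lambda(\R^{1+n})$ of norm at most $C\norm{F}_{L^2(Q)}$ (choosing, e.g., the minimal norm extension, so that the extension depends linearly on $F$), and the identification of $\big(H^{-1}_\lambda(\R^{1+n})\big)'$ with $H^{1}_\lambda(\R^{1+n})$ gives $v\in H^{1}_\lambda(\R^{1+n})$ with $\langle h,v\rangle_{H^{-1}_\lambda(\R^{1+n}),H^{1}_\lambda(\R^{1+n})}=\mathcal K(h)$ for all $h$ and $\norm{v}_{H^{1}_\lambda(\R^{1+n})}\leq C\norm{F}_{L^2(Q)}$. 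Testing against $h=P_{-a,\omega,-\lambda}z$, $z\in\mathcal C^\infty_0(Q)$, and integrating by parts --- so that $P_{a,\omega,\lambda}$, the formal transpose of $P_{-a,\omega,-\lambda}$ up to a zeroth order term, acts on $v$ --- shows that $v$ solves $P_{a,\omega,\lambda}v=F$ in $Q$. I then set $S_{a,\lambda}F:=v_{|Q}$; unravelling $\norm{v}_{H^{1}_\lambda(\R^{1+n})}\leq C\norm{F}_{L^2(Q)}$, which for $\lambda>1$ dominates $\lambda\norm{v}_{L^2(\R^{1+n})}+\norm{v}_{H^1(\R^{1+n})}$, gives a bounded operator $S_{a,\lambda}\in\mathcal B(L^2(Q);H^1(Q))$ satisfying \eqref{l3a}--\eqref{l3b}.

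The main obstacle is the first step: passing from the whole-space estimate \eqref{l8a} for $P_{-a,-\lambda,s}$ to the $H^{-1}_\lambda$ Carleman estimate for $P_{-a,\omega,-\lambda}$ on $\mathcal C^\infty_0(Q)$ requires repeating the somewhat delicate argument at the end of the proof of Lemma~\ref{l1} (insertion of $\langle D,\lambda\rangle$, estimate of the pseudodifferential remainder, and a cut-off argument), and, to be fully rigorous, one should also absorb the zeroth order term produced when identifying $P_{a,\omega,\lambda}$ with the formal transpose of $P_{-a,\omega,-\lambda}$; this term is harmless because $a\in W^{1,p}(Q)$ with $p>n+1$, so its multiplication operator is negligible in $H^{-1}_\lambda$ by an estimate of Lemma~\ref{l7} type combined with the gain in $\lambda$ carried by $\langle D,\lambda\rangle^{-1}$ in fractional Sobolev scales. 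Once this estimate is available, the remainder is the same Hahn--Banach/duality construction as in Lemma~\ref{l1bis}.
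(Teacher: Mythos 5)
Your proposal is correct and is essentially the paper's own argument: the paper gives no separate proof of this lemma, stating only that it follows ``in a similar way to Lemma \ref{l1bis}, using the Carleman estimate \eqref{l8a}'', which is exactly the route you take (fix $s$, remove the $e^{\pm st^2/2}$ perturbation and the $\langle D,\lambda\rangle$ conjugation as at the end of Lemma \ref{l1}, then run the Hahn--Banach/duality construction of Lemma \ref{l1bis}). Your additional remark about absorbing the zeroth-order term $\partial_t a$ arising from the formal transpose is a point the paper glosses over even in Lemma \ref{l1bis}, so flagging it is a welcome extra precaution rather than a deviation.
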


Then we can complete the proof of Proposition \ref{p3} by a fixed point argument.

\section{Uniqueness result}
This section is devoted to the proof of Theorem \ref{thm1}.  From now on we set $q=q_2-q_1$ and $a=a_2-a_1$ on $Q$ and  we assume  that $a=q=0$ on $\R^{1+n}\setminus Q$. For all $\theta\in\mathbb S^{n-1}$ and all  $r>0$, we set
\[\partial\Omega_{+,r,\theta}=\{x\in\partial\Omega:\ \nu(x)\cdot \theta>r\},\quad\partial\Omega_{-,r,\theta}=\{x\in\partial\Omega:\ \nu(x)\cdot \theta\leq r\}\]
and $\Sigma_{\pm,r,\theta}=(0,T)\times \partial\Omega_{\pm,r,\theta}$. Here and in the remaining of this text we always assume, without mentioning it, that $\theta$ and $r$ are chosen in such way that $\partial\Omega_{\pm,r,\pm \theta}$ contain  a non-empty relatively open subset of $\partial\Omega$.
Without lost of generality we assume that there exists $\epsilon>0$ such that  for all $\omega\in\{\theta\in\mathbb S^{n-1}:|\theta-\omega_0|\leq\epsilon\} $ we have $\partial\Omega_{-,\epsilon,\omega}\subset V'$. From now on we will decompose the proof of Theorem \ref{thm1} into two steps. We start by considering the recovery of the damping coefficient by proving that condition \eqref{thm1a} implies $a_1=a_2$. For this purpose we use the tools introduced in Section 4 and the two different geometric optics solutions of Section 5. Next, we consider the recovery of the potential. For this purpose, we prove that,  for $a_1=a_2$, \eqref{thm1a} implies $q_1=q_2$. In that step we will change  the form of the GO solutions by taking into account the fact that $a_1=a_2$.

\subsection{Recovery of the damping coefficient}

In this section we will prove that \eqref{thm1a} implies $a_1=a_2$. For this purpose we will use the notation of Section 4 and the results of the previous  sections. 
Let   $\lambda >\lambda_3$ and fix $\omega\in\{\theta\in\mathbb S^{n-1}:|\theta-\omega_0|\leq\epsilon\} $. According to Proposition \ref{p1}, we can introduce
\[u_1(t,x)=e^{-\lambda(t+x\cdot\omega)}(b_{1,\lambda}(t,x)+w_1(t,x)),\ (t,x) \in Q,\]
where $u_1\in H^1(Q)$ satisfies $\partial_t^2u_1-\Delta_xu_1-a_1\pd_tu_1+q_1u_1=0$, $b_{1,\lambda}$ is given by \eqref{conde1} with $\zeta\cdot(1,-\omega)=0$ and  $w_1$ satisfies \eqref{CGO11}. Moreover, in view of Proposittion \ref{p3}, we consider $u_2\in H^1(Q)$ a solution of $\partial_t^2u_2-\Delta_xu_2+a_2\pd_tu_2+q_2u_2=0$, of the form 
\[u_2(t,x)=e^{\lambda (t+x\cdot\omega)}(b_{2,\lambda}(t,x)+w_2(t,x)),\quad (t,x)\in Q,\]
 with $b_{2,\lambda}$  given by \eqref{conde2},  $w_2$ satisfies \eqref{CGO11}.
In view of Proposition \ref{p6}, there exists a unique weak solution $z_1\in H_\Box (Q)$ of
 \bel{eq3}
\left\{
\begin{array}{ll}
 \partial_t^2z_1-\Delta_xz_1 +a_1\pd_tz_1+q_1z_1=0 &\mbox{in}\ Q,
\\

\tau_{0}z_1=\tau_{0}u_2. &

\end{array}
\right.
\ee
Then, $u=z_1-u_2$ solves
  \bel{eq4}
\left\{\begin{array}{ll}
 \partial_t^2u-\Delta_xu +a_1\pd_tu+q_1u=(a_2-a_1)\pd_tu_2+(q_2-q_1)u_2 &\mbox{in}\ Q,
\\
u(0,x)=\partial_tu(0,x)=0 & \mathrm{on}\  \Omega,\\

u=0 &\mbox{on}\ \Sigma.
\end{array}\right.
\ee
Since $(a_2-a_1)\pd_tu_2+(q_2-q_1)u_2\in L^2(Q)$, applying    the theory developed in \cite[Chapter 3, Section 8]{LM1}, we can deduce that this problem admits a unique solution $u$ lying in $\mathcal C^1([0,T];L^2(\Omega))\cap \mathcal C([0,T];H^1_0(\Omega))$. Combining this with \cite[Theorem 2.1]{LLT}, we deduce that in addition  $\partial_\nu u\in L^2(\Sigma)$. Therefore, we have $u\in  H^1(Q)\cap H_{\Box}(Q)$ with $\partial_\nu u\in L^2(\Sigma)$. Using the fact that $u_1\in H^1(Q)\cap H_\Box(Q)$, we deduce that $(\partial_tu_1,-\nabla_xu_1)\in H_{\textrm{div}}(Q)=\{F\in L^2(Q;\mathbb C^{n+1}):\ \textrm{div}_{(t,x)}F\in L^2(Q)\}$. Therefore, in view of \cite[Lemma 2.2]{Ka}, we can apply the Green formula to get
\[\int_Q u(\Box u_1)dxdt =-\int_Q(\partial_t u\partial_t u_1-\nabla_x u\cdot\nabla_xu_1)dxdt+\left\langle(\partial_tu_1,-\nabla_xu_1)\cdot \textbf{n}, u\right\rangle_{H^{-{1\over2}}(\partial Q),H^{{1\over2}}(\partial Q)},\]
with $\textbf{n}$ the outward unit normal vector to $\partial Q$. In the same way, we find
\[\int_Q u_1(\Box u)dxdt =-\int_Q(\partial_t u\partial_t u_1-\nabla_x u\cdot\nabla_xu_1)dxdt+\left\langle(\partial_tu,-\nabla_xu)\cdot \textbf{n}, u_1\right\rangle_{H^{-{1\over2}}(\partial Q),H^{{1\over2}}(\partial Q)}.\]
In addition, since $u,u_1\in H^1(Q)$, we have
\[\int_Qa_1\pd_tuu_1dxdt=\int_\Omega a_1(T,x)u(T,x)u_1(T,x)dx-\int_Q\pd_ta_1 uu_1dxdt-\int_Qu a_1 \pd_tu_1dxdt.\]
From these three formulas we deduce that
\[\begin{array}{l}\int_Q(a_2-a_1)\pd_tu_2u_1dxdt+\int_Q(q_2-q_1)u_2u_1dxdt\\
\ \\
=\int_Q u_1(\Box u+a_1\pd_t u+q_1 u)dxdt-\int_Q u(\Box u_1-a_1\pd_tu_1+q_1u_1)dxdt\\
\ \\
=\int_\Omega a_1(T,x)u(T,x)u_1(T,x)dx-\int_Q\pd_ta_1 uu_1dxdt+\left\langle(\partial_tu,-\nabla_xu)\cdot \textbf{n}, u_1\right\rangle_{H^{-{1\over2}}(\partial Q),H^{{1\over2}}(\partial Q)}\\
\ \\
\ \ \ -\left\langle(\partial_tu_1,-\nabla_xu_1)\cdot \textbf{n}, u\right\rangle_{H^{-{1\over2}}(\partial Q),H^{{1\over2}}(\partial Q)}.\end{array}\]
On the other hand we have $u_{|t=0}=\partial_tu_{|t=0}=u_{|\Sigma}=0$ and condition \eqref{thm1a} implies that $u_{|t=T}=\partial_\nu u_{|V}=0$. Combining this with the fact that $u\in \mathcal C^1([0,T];L^2(\Omega))$ and $\partial_\nu u\in L^2(\Sigma)$, we obtain
\begin{equation}\label{t3a} \int_Qa\pd_tu_2u_1dxdt+\int_Qqu_2u_1dxdt=-\int_{\Sigma\setminus V}\partial_\nu uu_1d\sigma(x)dt+\int_\Omega \partial_tu(T,x)u_1(T,x)dx-\int_Q\pd_ta_1 uu_1dxdt.\end{equation}
Applying  the Cauchy-Schwarz inequality to the first expression on the right hand side of this formula and using the fact that  $(\Sigma\setminus V)\subset {\Sigma}_{+,\epsilon,\omega}$, we get
\[\begin{aligned}\abs{\int_{\Sigma\setminus V}\partial_\nu uu_1d\sigma(x)dt}&\leq\int_{{\Sigma}_{+,\epsilon,\omega}}\abs{\partial_\nu ue^{-\lambda(t+\omega\cdot x)}(b_{1,\lambda}+w_1)} d\sigma(x)dt \\
 \ &\leq C(1+\norm{w_1}_{L^2(\Sigma)})\left(\int_{{\Sigma}_{+,\epsilon,\omega}}\abs{e^{-\lambda(t+\omega\cdot x)}\partial_\nu u}^2d\sigma(x)dt\right)^{\frac{1}{2}},\end{aligned}\]
for some $C$ independent of $\lambda$. On the other hand, one can check that
\[\norm{w_1}_{L^2(\Sigma))}\leq C\norm{w_1}_{L^2(Q)}^{{1\over 2}}\norm{w_1}_{H^1(Q)}^{{1\over 2}}.\]
Combining this with \eqref{CGO11}, we obtain
$$\abs{\int_{\Sigma\setminus V}\partial_\nu uu_1d\sigma(x)dt}\leq C \lambda^{{3-2\alpha\over 6}}\left(\int_{{\Sigma}_{+,\epsilon,\omega}}\abs{e^{-\lambda(t+\omega\cdot x)}\partial_\nu u}^2d\sigma(x)dt\right)^{\frac{1}{2}}.$$
 In the same way, we have
\[\begin{aligned}\abs{\int_\Omega \partial_tu(T,x)u_1(T,x)dx}&\leq\int_{\Omega}\abs{\partial_t u(T,x)e^{-\lambda(T+\omega\cdot x)}\left(e^{-i\xi\cdot(T,x)}+w_1(T,x)\right)}dx \\
 \ &\leq C \lambda^{{3-2\alpha\over 6}}\left(\int_{\Omega}\abs{e^{-\lambda(T+\omega\cdot x)}\partial_t u(T,x)}^2dx\right)^{\frac{1}{2}}\end{aligned}\]
and conditions \eqref{condf}, \eqref{CGO11} imply
$$\begin{aligned}\abs{\int_Q\pd_ta_1 uu_1dxdt}&\leq C\norm{a_1}_{W^{1,p}(Q)}(1+\norm{w_1}_{H^1(Q)})\left(\int_{Q}\abs{e^{-\lambda(t+\omega\cdot x)} u(t,x)}^2dxdt\right)^{\frac{1}{2}}\\
\ &\leq C\lambda^{{3-\alpha\over3}}\left(\int_{Q}\abs{e^{-\lambda(t+\omega\cdot x)} u(t,x)}^2dxdt\right)^{\frac{1}{2}}.\end{aligned}$$
Combining these estimates with the Carleman estimate \eqref{c1a} and applying the fact that $u_{|t=T}=\partial_\nu u_{|\Sigma_{-,\omega}}=0$, ${\partial\Omega}_{+,\epsilon,\omega}\subset {\partial\Omega}_{+,\omega}$, we find
\begin{eqnarray}\label{loi}&&\abs{\int_Q(a_2-a_1)\pd_tu_2u_1dxdt+\int_Q(q_2-q_1)u_2u_1dxdt}^2\cr
&&\leq C\lambda^{{3-2\alpha\over 3}}\left(\int_{{\Sigma}_{+,\epsilon,\omega}}\abs{e^{-\lambda(t+\omega\cdot x)}\partial_\nu u}^2d\sigma(x)dt+\int_\Omega \abs{e^{-\lambda(T+\omega\cdot x)}\partial_tu(T,x)}^2dx+\lambda\int_{Q}e^{-2\lambda(t+\omega\cdot x)} \abs{u}^2dxdt\right)\cr
&&\leq \epsilon^{-1}C\lambda^{{3-2\alpha\over 3}}\left(\int_{{\Sigma}_{+,\omega}}\abs{e^{-\lambda(t+\omega\cdot x)}\partial_\nu u}^2\omega\cdot \nu(x)d\sigma(x)dt+\int_\Omega \abs{e^{-\lambda(T+\omega\cdot x)}\partial_tu(T,x)}^2dx\right)\cr
&&\ \ \ +C\lambda^{{6-2\alpha\over 3}}\int_{Q}e^{-2\lambda(t+\omega\cdot x)} \abs{u}^2dxdt\cr
&&\leq \epsilon^{-1}C\lambda^{-{2\alpha\over 3}}\left(\int_Q\abs{ e^{-\lambda(t+\omega\cdot x)}(\partial_t^2-\Delta_x+a_1\pd_t +q_1)u}^2dxdt\right)\cr
&&\leq \epsilon^{-1}C\lambda^{-{2\alpha\over 3}}\left(\int_Q\abs{ e^{-\lambda(t+\omega\cdot x)}(a\pd_tu_2+qu_2)}^2dxdt\right)
.\end{eqnarray}
Here $C>0$ stands for some generic constant independent of $\lambda$. In view of \eqref{condf} and \eqref{CGO11}, we have
$$\int_Q\abs{ e^{-\lambda(t+\omega\cdot x)}(a\pd_tu_2+qu_2)}^2dxdt\leq C\lambda^2$$
and we deduce  that
\[\abs{\int_Q(a_2-a_1)\pd_tu_2u_1dxdt+\int_Q(q_2-q_1)u_2u_1dxdt}\leq C\lambda^{{3-\alpha \over3}}.\]
It follows 
\begin{equation}\label{t3cc}\lim_{\lambda\to+\infty}{\int_Q(a_2-a_1)\pd_tu_2u_1dxdt+\int_Q(q_2-q_1)u_2u_1dxdt\over\lambda}=0.\end{equation}
On the other hand, from the properties of $u_1$, $u_2$, one can check that
\[{\int_Q(a_2-a_1)\pd_tu_2u_1dxdt+\int_Q(q_2-q_1)u_2u_1dxdt\over\lambda}=\int_{\R^{1+n}}a(t,x)b_{1,\lambda}(t,x)b_{2,\lambda}(t,x)dxdt+ \int_{Q}Z(t,x) dxdt,\]
where $ Z$ satisfies
\[\abs{\int_{Q}Z(t,x)  dxdt}\leq C\lambda^{-\frac{\alpha}{3}},\]
with $C$ independent of $\lambda$. Combining this with \eqref{t3cc}, we deduce that 
\bel{t3r}\lim_{\lambda\to+\infty}\int_{\R^{1+n}}a(t,x)b_{1,\lambda}(t,x)b_{2,\lambda}(t,x)dxdt=0.\ee
But, in view of \eqref{a1a} and \eqref{a1b}, we have
\[\norm{a-a_\lambda}_{L^\infty(\R^{1+n})}\leq \norm{\tilde{a}_1-a_{1,\lambda}}_{L^\infty(\R^{1+n})}+\norm{\tilde{a}_2-a_{2,\lambda}}_{L^\infty(\R^{1+n})}\leq C\lambda^{-{\alpha \over 3}}.\]
Then, using \eqref{condf} and the fact that supp$a_\lambda\cup$supp$a\subset B_{R+1}$, in light of \eqref{t3r}, we obtain
\bel{t3s}\lim_{\lambda\to+\infty}\int_{\R^{1+n}}a_\lambda(t,x)b_{1,\lambda}(t,x)b_{2,\lambda}(t,x)dxdt=\lim_{\lambda\to+\infty}\int_{\R^{1+n}}a(t,x)b_{1,\lambda}(t,x)b_{2,\lambda}(t,x)dxdt=0.\ee
Moreover, in view of \eqref{conde1}-\eqref{conde2}, we find
$$\begin{array}{l}\int_{\R^{1+n}}a_\lambda(t,x)b_{1,\lambda}(t,x)b_{2,\lambda}(t,x)dxdt\\
\ \\
=\int_{\R^{1+n}}a_{\lambda}(t,x)\exp\left({\int_0^{+\infty} a_{\lambda}((t,x)+s(1,-\omega))ds\over 2}\right)e^{-i\zeta\cdot(t,x)}dxdt.\end{array}$$
Decomposing $\R^{1+n}$ into the direct sum $\R^{1+n}=\R(1,-\omega)\oplus(1,-\omega)^{\bot}$ and applying the Fubini's theorem we get
\bel{t3t}\begin{array}{l}\int_{\R^{1+n}}a_\lambda(t,x)b_{1,\lambda}(t,x)b_{2,\lambda}(t,x)dxdt\\
\ \\
=\int_{(1,-\omega)^{\bot}}\left(\int_\R a_{\lambda}(\kappa+\tau(1,-\omega))\exp\left({\int_\tau^{+\infty} a_{\lambda}(\kappa+s(1,-\omega))ds\over 2}\right)\sqrt{2}d\tau\right)e^{-i\zeta\cdot \kappa}d\kappa.\end{array}\ee
In addition, for all $\kappa\in (1,-\omega)^{\bot}$ we have
$$\begin{aligned}\int_\R a_{\lambda}(\kappa+\tau(1,-\omega))\exp\left({\int_\tau^{+\infty} a_{\lambda}(\kappa+s(1,-\omega))ds\over 2}\right)d\tau&=-2\int_\R \pd_\tau\exp\left({\int_\tau^{+\infty} a_{\lambda}(\kappa+s(1,-\omega))ds\over 2}\right)d\tau\\
\ &=-2\left(1-\exp\left({\int_\R a_{\lambda}(\kappa+s(1,-\omega))ds\over 2}\right)\right).\end{aligned}$$
Combining this with \eqref{t3t}, we find
\bel{tete}\begin{array}{l}\int_{\R^{1+n}}a_\lambda(t,x)b_{1,\lambda}(t,x)b_{2,\lambda}(t,x)dxdt\\
\ \\
=-2\sqrt{2}\int_{(1,-\omega)^{\bot}}\left(1-\exp\left({\int_\R a_{\lambda}(\kappa+s(1,-\omega))ds\over 2}\right)\right)e^{-i\zeta\cdot\kappa}d\kappa.\end{array}\ee
Now let us introduce the Fourier transform $\mathcal F_{(1,-\omega)^{\bot}}$ on $(1,-\omega)^{\bot}$ defined by
\[\mathcal F_{(1,-\omega)^{\bot}}f(\xi)=(2\pi)^{-{n\over2}}\int_{(1,-\omega)^{\bot}}f(\kappa)e^{-i\kappa\cdot\xi}d\kappa,\quad f\in L^1((1,-\omega)^{\bot}), \ \ \xi\in(1,-\omega)^{\bot}.\]
 Fixing 
$$f:(1,-\omega)^{\bot}\ni \kappa\mapsto\left(1-\exp\left({\int_\R a_{\lambda}(\kappa+s(1,-\omega))ds\over 2}\right)\right)$$
and applying \eqref{t3s}, \eqref{tete}, we find $\mathcal F_{(1,-\omega)^{\bot}}f=0$. Thus, we have
\[1-\exp\left({\int_\R a_{\lambda}(\kappa+s(1,-\omega))ds\over 2}\right)=0,\quad \kappa\in(1,-\omega)^{\bot}.\]
From this formula and the fact that $a$ is real valued, we deduce that
\[\int_\R a_{\lambda}(\kappa+s(1,-\omega))ds=0,\quad \kappa\in(1,-\omega)^{\bot}.\]
Therefore, taking the Fourier transform on $(1,-\omega)^{\bot}$  of the function
\[(1,-\omega)^{\bot}\ni \kappa\mapsto\int_\R a_{\lambda}(\kappa+s(1,-\omega))ds\]
at $\zeta\in(1,-\omega)^{\bot}$ and applying the Fubini's theorem, we obtain
\[\mathcal F(a_\lambda)(\zeta)=(2\pi)^{-{n+1\over2}}\int_{\R^{1+n}} a_{\lambda}(t,x)e^{-i\zeta\cdot (t,x)}dxdt=0.\]
Then, using the fact that $a_{\lambda}$ converge to $a$ in $L^1(\R^{1+n})$ we obtain  that
for all $\omega\in\{y\in\mathbb S^{n-1}:|y-\omega_0|\leq\epsilon\} $ and  all $\zeta\in(1,-\omega)^\bot$, the Fourier transform $\mathcal F(a)$ of $a$ satisfies
\[\mathcal F(a)(\zeta)=(2\pi)^{-{n+1\over2}}\int_{\R^{1+n}}a(t,x)e^{-i\zeta\cdot(t,x)}dxdt=(2\pi)^{-{n+1\over2}}\lim_{\lambda\to+\infty}\int_{\R^{1+n}} a_{\lambda}(t,x)e^{-i\zeta\cdot (t,x)}dxdt=0.\]
On the other hand, since $a\in L^\infty(\R^{1+n})$ is  supported on $\overline{Q}$ which is compact,  $\mathcal F(a)$ is analytic and it follows that $a=0$ and $a_1=a_2$. This completes the first step of the proof of Theorem \ref{thm1}. We can now consider the recovery of the potential by assuming $a_1=a_2$.

\subsection{Recovery of the potential}

In this subsection we assume that $a_1=a_2$ and  we will prove that \eqref{thm1a} implies $q_1=q_2$. Using the notation of Section 4,  for $\zeta\in (1,-\omega)^\bot$, $\zeta\neq0$, we fix 
\bel{aa1c} b_{1,\lambda}(t,x)=e^{-i\zeta\cdot(t,x)}\exp\left(-{\int_0^{+\infty} a_{1,\lambda}((t,x)+s(1,-\omega))ds\over 2}\right),\ee
\bel{aa1d}b_{2,\lambda}(t,x)=\exp\left({\int_0^{+\infty} a_{1,\lambda}((t,x)+s(1,-\omega))ds\over 2}\right).\ee
Repeating our previous arguments, from \eqref{CGO11} and  \eqref{loi} we deduce
$$\abs{\int_Q(q_2-q_1)u_2u_1dxdt}^2\leq C\lambda^{-{2\alpha\over 3}}\left(\int_Q\abs{ e^{-\lambda(t+\omega\cdot x)}qu_2}^2dxdt\right)\leq C\lambda^{-{2\alpha\over 3}}.$$
Therefore, we have
\bel{aa1e}\lim_{\lambda\to+\infty}\int_Q(q_2-q_1)u_2u_1dxdt=0.\ee
Moreover, estimates \eqref{CGO11} imply
$$\int_Q(q_2-q_1)u_2u_1dxdt=\int_{\R^{1+n}}q(t,x)e^{-i\zeta\cdot(t,x)}dxdt+\int_{Q}W(t,x)dxdt,$$
with
\[\int_{Q}|W(t,x)|dxdt\leq C\lambda^{-{\alpha\over3}}.\]
Combining this with \eqref{aa1e}, for all $\omega\in\{y\in\mathbb S^{n-1}:|y-\omega_0|\leq\epsilon\} $ and  all $\zeta\in(1,-\omega)^\bot$, the Fourier transform $\mathcal F(q)$ of $q$ satisfies
\[\mathcal F(q)(\zeta)=(2\pi)^{-{n+1\over2}}\int_{\R^{1+n}}q(t,x)e^{-i\zeta\cdot(t,x)}dxdt=0.\]
On the other hand, since $q\in L^\infty(\R^{1+n})$ is  supported on $\overline{Q}$ which is compact,  $\mathcal F(q)$ is analytic and it follows that $q=0$ and $q_1=q_2$. This completes the  proof of Theorem \ref{thm1}.

\end{document}